\theoremstyle{plain}
\newtheorem{theorem}{Theorem} 
\newtheorem{cor}[theorem]{Corollary}
\newtheorem{conj}[theorem]{Conjecture}
\newtheorem{lemma}[theorem]{Lemma}
\newtheorem{prob}[theorem]{Problem}
\newtheorem*{claim*}{Claim}
\theoremstyle{definition}
\newtheorem{remark}[theorem]{Remark}
\newcommand{\comment}[1]{}
\newcommand{\bdry}{\ensuremath{\partial}}
\newcommand{\nbhd}{\ensuremath{\mathcal{N}}}
\newcommand{\Q}{\ensuremath{\mathbb{Q}}}
\newcommand{\Z}{\ensuremath{\mathbb{Z}}}
\newcommand{\RP}{\ensuremath{\mathbb{RP}}}
\renewcommand{\b}{\ensuremath{\mathfrak{b}}}
\newcommand{\PSL}{\ensuremath{\mathrm{PSL}}}
\newcommand{\mobius}{M\"{o}bius }
\newcommand{\mat}[4]{\left(\begin{smallmatrix} #1 & #2 \\ #3 & #4 \end{smallmatrix}\right)}
\newcommand{\col}[2]{\left(\begin{smallmatrix} #1\\ #2 \end{smallmatrix}\right)}
\title{A Cabling Conjecture for Knots in Lens Spaces}
\author{Kenneth L.\ Baker}
\address{Department of Mathematics\\University of Miami\\ Coral Gables, FL 33146 \\ USA}
\email{k.baker@math.miami.edu}
\dedicatory{Dedicated to the 70th birthday of Professor Fico Gonz\'alez Acu\~na.}
\begin{document}
\maketitle
\begin{abstract}
Closed $3$--string braids admit many bandings to two-bridge links.  By way of the Montesinos Trick, this allows us to construct infinite families of knots in the connected sum of lens spaces $L(r,1) \# L(s,1)$ that admit a surgery to a lens space for all pairs of integers $(r,s)$ except $(0,0)$. These knots are typically hyperbolic. We also demonstrate that the previously known two families of examples of hyperbolic knots in non-prime manifolds with lens space surgeries of Eudave-Mu\~noz--Wu and Kang all fit this construction.  As such, we propose a generalization of the Cabling Conjecture of Gonzales-Acu\~na--Short for knots in lens spaces.  
\end{abstract}

\section{The main results, context, and conjectures}
Banding the braid closure $L=\widehat{\beta}$ of a closed $3$--string braid $\beta$ along a ``level'' framed arc $a$ produces a two-bridge link $L'=\wideparen{\beta}$ that is a plait closure of $\beta$ and a dual framed arc $a'$ as shown in Figure~\ref{fig:3sbto2br}.  (The arcs here are framed by the plane of the page.)  Repeating this with conjugates of $\beta$ often gives many bandings from $L$ to two-bridge links.

\begin{figure}[h!]
\centering
\includegraphics[height=1in]{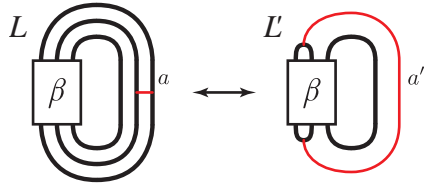}
\caption{A banding between the braid closure $\widehat{\beta}$ and the plait closure $\wideparen{\beta}$ of a $3$--string braid $\beta$.}
\label{fig:3sbto2br}
\end{figure}

Let $B_3$ be the $3$--string braid group with standard generators $\sigma_1, \sigma_2$, and let $\rho\colon B_3 \to \PSL_2\Z$ be defined by $\rho(\sigma_1) =\pm \mat{1}{0}{-1}{1}$ and $\rho(\sigma_2)=\pm \mat{1}{1}{0}{1}$. (See Section~\ref{sec:braids}.)
Let us say a {\em slope} is an ordered pair of coprime integers $(p,q)$ taken up to overall sign.  Each slope $(p,q)$ then corresponds to an extended rational number $q/p \in \widehat{\Q} = \Q \cup \{\infty\}$ which we will also call a slope.
\begin{theorem}\label{thm:closedbraidbanding}
Fix $\beta \in B_3$, and let $\rho(\beta) = \mat{a}{b}{c}{d}$.  Then for every slope $(p,q)$ there is a framed arc $a_\beta^{p,q}$ on the link $\widehat{\beta}$ giving a banding to the two-bridge link
$\b(
-b p^2  - (a - d) p q +c q^2, 
 b p p' + (a - d) p q'  -c q q' + a 
 )$ 
where $p',q' \in \Z $ such that $qp'-pq'=1$.
\end{theorem}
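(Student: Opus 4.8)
The plan is to prove the theorem in three steps: identify the two‑bridge link produced by one distinguished arc, show that every slope is reached by transporting that arc around the closure, and then read the general formula off a matrix product.

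First I would treat the distinguished arc. Take $a=a_\beta^{0,1}$ to be the level arc of Figure~\ref{fig:3sbto2br}, so that banding $\widehat\beta$ along it gives the plait closure $\wideparen\beta$, and I claim $\wideparen\beta=\b(c,a)$, the two‑bridge link read off from the bottom‑left and top‑left entries of $\rho(\beta)$. (This is the $(p,q)=(0,1)$ case of the statement: there one may take $p'=1$, the numerator is $-b\cdot 0-(a-d)\cdot 0+c\cdot 1=c$, and the second parameter is $a$.) I would prove this by induction on the length of $\beta$ as a word in $\sigma_1^{\pm1},\sigma_2^{\pm1}$. From the picture, left multiplication by $\sigma_2^{\pm1}$ is absorbed into a cap of the plait closure and so leaves $\wideparen\beta$ unchanged — matching the fact that $\rho(\sigma_2)^{\pm1}=\mat{1}{1}{0}{1}^{\pm1}$ alters only the top‑left entry, by a multiple of the bottom‑left entry, hence alters the second two‑bridge parameter only modulo the first; while left multiplication by $\sigma_1^{\pm1}$ inserts a twist region into the underlying four‑plat, changing its Conway fraction by left multiplication by $\rho(\sigma_1)^{\pm1}$. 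Since $\rho(\sigma_1),\rho(\sigma_2)$ generate $\PSL_2\Z$ and both moves track $\rho$ exactly, the identification $\wideparen\beta=\b(c,a)$ follows. This is where the rational‑tangle/continued‑fraction calculus and the ``framed by the plane of the page'' normalization of the dual arc $a'$ actually get used.

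Next I would reach an arbitrary slope $(p,q)$ by conjugation. For any $\gamma\in B_3$ one has $\widehat\beta=\widehat{\gamma\beta\gamma^{-1}}$ as links, the isotopy simply pushing $\gamma$ around through the closure arcs; transporting the distinguished arc on $\widehat{\gamma\beta\gamma^{-1}}$ back along this isotopy defines $a_\beta^{p,q}$, where $(p,q)$ records the class of $\gamma$ in $\PSL_2\Z$ modulo the (infinite cyclic, parabolic) stabilizer of the distinguished arc. Because $\PSL_2\Z$ acts transitively on slopes — equivalently on $\widehat{\Q}$ — every slope occurs; explicitly one may take $\rho(\gamma)=\mat{-p'}{q'}{p}{-q}$, which has determinant $qp'-pq'=1$, and replacing $(p',q')$ by another admissible pair is exactly a parabolic shift, which moves the second parameter by multiples of the first, as it must since that parameter of $\b(\cdot,\cdot)$ is only defined modulo the first. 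Applying the base case to $\gamma\beta\gamma^{-1}$, the banded link is $\b(c'',a'')$, where $c''$ and $a''$ are the bottom‑left and top‑left entries of $\rho(\gamma)\rho(\beta)\rho(\gamma)^{-1}$; expanding this product with the above $\rho(\gamma)$ and using $qp'-pq'=1$ yields bottom‑left entry $-bp^2-(a-d)pq+cq^2$ and top‑left entry $bpp'+(a-d)pq'-cqq'+a$, which is precisely the asserted two‑bridge link.

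The only real obstacle is the first step: pinning the diagrammatic identification of the banded link to a specific two‑bridge link \emph{including the correct residue of the second parameter}, keeping the sign conventions for $\rho$, for the plait closure, and for $\b(\cdot,\cdot)$ mutually consistent, and correctly tracking the framing of the dual arc as it is dragged around during the conjugation. Once that is settled, the transitivity argument and the final matrix expansion are routine.
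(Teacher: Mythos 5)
Your proposal is correct and follows essentially the same route as the paper: realize $a_\beta^{p,q}$ as the level arc on a conjugate closed-braid presentation (the paper writes $\widehat{\gamma^{-1}\beta\gamma}$ with $\rho(\gamma)=\mat{q}{q'}{p}{p'}$, which is the inverse in $\PSL_2\Z$ of your matrix, so the conjugation is literally the same), check well-definedness of the arc modulo the parabolic stabilizer generated by $\sigma_1$ and the center, and read the banded link off as $\b(c'',a'')$ from the bottom-left and top-left entries of the conjugated matrix. The only difference is that you sketch an inductive proof of $\wideparen{\beta}=\b(c,a)$, which the paper simply records as a standard fact in its preliminaries on braids.
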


Using the Birman-Menasco's classification of non-prime links with braid index $3$ \cite{birmanmenasco} and the Montesinos Trick \cite{montesinos}, these types of bandings allow us to construct many examples of hyperbolic knots in lens spaces with surgery to non-prime $3$--manifolds.  

\begin{theorem}\label{thm:main}
For each pair of integers $(r,s)$ and slope $(p,q)$ there is a knot $K_{r,s}^{p,q}$ in $L(r,1) \# L(s,1)$ that admits an integral surgery to the lens space
$L(s p^2   - r s  p q + r q^2, s p p'  - r s p q'  + r q q' -1)$
for $p',q' \in \Z$ such that $q p' - p q' = 1$.

For each $(r,s) \neq (0,0)$, infinitely many of these knots are hyperbolic and distinct.

For each $(r,s)$ with both  $|r|,|s| \not \in \{0,2,3\}$,  the set of volumes of the hyperbolic knots $K_{r,s}^{p,q}$ is unbounded.
\end{theorem}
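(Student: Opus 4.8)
The plan is to run the whole argument through the associated family of two-bridge links together with a ``highly twisted'' volume estimate. First I would fix, for each admissible pair $(r,s)$, a concrete $3$--braid $\beta = \beta_{r,s}$ whose closure $\widehat\beta$ is the relevant composite braid-index-$3$ link, namely (up to conjugacy and orientation conventions) $\beta_{r,s} = \sigma_1^{-r}\sigma_2^{-s}$, so that $\widehat\beta = T(2,r)\# T(2,s)$ and $\Sigma_2(\widehat\beta) = L(r,1)\# L(s,1)$. One computes $\rho(\beta_{r,s}) = \mat{1}{-s}{r}{1-rs}$, and feeding this into Theorem~\ref{thm:closedbraidbanding} the arc $a_\beta^{p,q}$ produces the two-bridge link $\b\big(Q(p,q),\,\gamma(p,q)\big)$ with $Q(p,q) = s p^2 - rs\,pq + r q^2$, matching the lens-space order in Theorem~\ref{thm:main}; its discriminant is $rs(rs-4)$. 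The hypothesis $|r|,|s|\notin\{0,2,3\}$ enters exactly here: it guarantees that $Q$ is not identically zero (indeed of full rank in the cases that matter) and that the closures $\widehat\beta$ and the tangles produced below are genuinely hyperbolic rather than among the small exceptional pieces — Hopf links, trefoils, $(2,k)$ torus links, $\RP^3$, $L(3,1)$ — forced when $|r|$ or $|s|$ lies in $\{0,2,3\}$.

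Next I would choose the slopes. Since $Q$ represents arbitrarily large integers (any full-rank integral binary quadratic form does, definite or indefinite), I pick a sequence $(p_n,q_n)$ with $|(p_n,q_n)|\to\infty$ so that $q_n/p_n$ tends to an irrational for which the well-defined reduction $\gamma(p_n,q_n)/Q(p_n,q_n) \in \widehat\Q$ converges to an irrational $\theta\in(0,1)$. Then every finite truncation of the continued fraction of $\theta$ is eventually an initial segment of a continued fraction expansion of $\gamma(p_n,q_n)/Q(p_n,q_n)$, so the two-bridge links $L'_n := \b(Q(p_n,q_n),\gamma(p_n,q_n))$ have twist number $t(L'_n)\to\infty$. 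I would also record that for large $n$ these $L'_n$ are hyperbolic two-bridge links (not $(2,k)$ torus links), again from the non-degeneracy of $Q$, and that along a subsequence they are pairwise distinct.

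Finally I would transfer this complexity to the knots themselves. By the Montesinos Trick, $\ext(K_{r,s}^{p,q})$ — hyperbolic for infinitely many $(p,q)$ by the previous part of the theorem — is the double branched cover of the tangle obtained from $(S^3,\widehat\beta)$ by deleting a neighbourhood of the band $a_\beta^{p,q}$; for the slopes $(p_n,q_n)$ just chosen this tangle is prime, alternating, and twist-reduced with $\asymp t(L'_n)$ twist regions, each of which may be taken to have at least three crossings after absorbing bounded corrections into the braid word. I would then invoke a highly-twisted lower volume bound for such branched covers: cut $\ext(K_{r,s}^{p_n,q_n})$ along the surfaces covering the twist-region Conway spheres of the tangle and apply the Agol--Storm--Thurston guts estimate (or quote the Lackenby / Futer--Kalfagianni--Purcell bounds for the correspondingly highly twisted diagram) to get $\vol\big(\ext(K_{r,s}^{p_n,q_n})\big) \ge c\, t(L'_n) - c' \to \infty$, with $c>0$ and $c'$ independent of $n$.

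The main obstacle I anticipate is precisely this last step: producing a genuine lower bound on the volume of the branched cover (an efficient triangulation only gives the routine upper bound), which means showing that the surfaces covering the twist-region spheres stay $\pi_1$--injective in $\ext(K_{r,s}^{p_n,q_n})$ and that the guts of the complementary pieces have Euler characteristic growing linearly in $t(L'_n)$ — equivalently, that the relevant tangles are ``sufficiently twisted'' in the technical sense these estimates require. A secondary, purely bookkeeping difficulty is verifying that $\{0,2,3\}$ is exactly the set of excluded absolute values: that for $|r|$ or $|s|$ in this set the quadratic form degenerates or a tangle piece falls into the finite exceptional list, while for every other value the construction above applies verbatim.
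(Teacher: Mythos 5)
Your first paragraph reproduces the paper's construction (band $\widehat{\sigma_2^{r}\sigma_1^{s}}=\b(r,1)\#\b(s,1)$ along $a_\beta^{p,q}$, apply Theorem~\ref{thm:closedbraidbanding}, lift by the Montesinos Trick), and that part is sound. The gaps are in the second and third assertions. First, your hyperbolicity argument is circular: you describe $\ext(K_{r,s}^{p,q})$ as ``hyperbolic for infinitely many $(p,q)$ by the previous part of the theorem,'' but the existence of infinitely many hyperbolic $K_{r,s}^{p,q}$ \emph{is} the second assertion; nothing earlier supplies it, and your proposal never proves it. Second, the volume lower bound you need --- a guts/twist-number estimate for the \emph{double branched cover} of the tangle $(S^3,\widehat\beta)\setminus\nbhd(a_\beta^{p,q})$ --- is exactly the step you flag as the obstacle, and it is not off the shelf: the Lackenby and Futer--Kalfagianni--Purcell bounds are stated for link complements, and transporting them to branched covers requires proving $\pi_1$-injectivity of the lifted Conway spheres and linear growth of guts; moreover the twist number of the two-bridge link $\b(Q(p,q),\gamma(p,q))$ does not directly control the geometry of the branched cover of the tangle. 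Third, your diagnosis of the excluded set $\{0,2,3\}$ is incorrect: it has nothing to do with degeneracy of $sp^2-rspq+rq^2$ or exceptional tangle pieces. As the paper's remark states, it is an artifact of Lemma~\ref{lem:hypchainlinksurgery}: $s$--surgery on one cusp of $C_{2n}$ is shown hyperbolic for all $n\ge 3$ only when $|s|=1$ or $|s|>3$ (via the $6$--Theorem and Neumann--Reid), while $|s|\in\{2,3\}$ is verified only for small $n$ by SnapPy, and $s=0$ fails.

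For contrast, the paper's route handles all three assertions at once: writing $q/p=[b_k,\dots,b_1]$, the exterior of $K_{r,s}^{p,q}$ is a Dehn filling of the minimally twisted chain link complement $C_{2(k+1)}$ with coefficients $(b_1,\dots,b_k+r,s,-b_k,\dots,-b_1)$. After the single $s$--filling the manifold is still hyperbolic by Lemma~\ref{lem:hypchainlinksurgery}, so Thurston's Hyperbolic Dehn Surgery Theorem gives hyperbolicity for all $|b_i|$ large, distinguishes infinitely many knots by volume (the volumes increase to $\vol(C_{2(k+1)})$ from below), and yields unboundedness by letting $k\to\infty$ since $\vol(C_{2(k+1)})\ge 2(k+1)v$. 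If you wish to pursue your route, you would have to prove the branched-cover volume estimate from scratch and supply an independent hyperbolicity argument; the Dehn-filling argument is shorter and also makes the exceptional set transparent.
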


\begin{remark} \
\begin{enumerate}
\item The reasons for excluding both $|r|$ and $|s|$ from being $2$ or $3$ in the last statement of the theorem is an artifact of the proof.  One expects it to hold true for all pairs of integers $(r,s) \neq (0,0)$.

\item
When $(r,s) = (0,0)$, the knots $K_{0,0}^{p,q} \subset S^1 \times S^2 \# S^1 \times S^2$ are all homeomorphic and non-hyperbolic.  They are surgery dual to the trivial knot in $S^1 \times S^2$.

\item
The knots $K_{r,s}^{p,q}$ are analogous to Berge's doubly primitive knots of families VII and VIII, the knots lying in the fiber of a trefoil or the figure eight knot, \cite{berge} (see also \cite{bakergofk,yamadagofk}).  Indeed, they comprise exactly those families when $r=s=\pm1$ and $r=-s=\pm1$ respectively.

\item 
If, say, $s=\pm1$, then the knots $K_{r,\pm1}^{p,q}$ are in $L(r,1)$ and have non-trivial lens space surgeries.
In particular, when $\{r,s\}=\{0,\pm1\}$  these knots form the family  of knots in $S^1 \times S^2$ called {\sc gofk} by Baker--Buck--Lecuona \cite{bbl} and called $A_{m,n}$ by Kadokami--Yamada \cite{kadokami-yamada}.

\item
When $s\neq \pm1$ and $r\neq \pm1$, the manifold $L(r,1)\#L(s,1)$ is a non-trivial connected sum.  According to Theorem~\ref{thm:main}, if furthermore $|r|,|s|>1$ there are infinitely many hyperbolic knots $K_{r,s}^{p,q}$ in the non-trivial connected sum of lens spaces $L(r,1)\#L(s,1)$ all give surgeries to lens spaces. Viewing this dually gives the following corollary.
\end{enumerate}
\end{remark}

\begin{cor}\label{cor:nonprime}
For each pair of integers $(r,s)$ with $|r|,|s|>1$, there exists infinitely many lens spaces containing hyperbolic knots that admit a surgery to the non-trivial connected sum $L(r,1) \#L(s,1)$.
\end{cor}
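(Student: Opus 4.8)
The plan is to deduce this from Theorem~\ref{thm:main} by passing to surgery-dual knots. Fix $(r,s)$ with $|r|,|s|>1$, so that $L(r,1)\#L(s,1)$ is a non-trivial connected sum, and recall from Theorem~\ref{thm:main} that it contains infinitely many pairwise distinct hyperbolic knots $K_{r,s}^{p,q}$, each admitting an integral surgery to the space $Y_{r,s}^{p,q}=L(sp^2-rspq+rq^2,\ spp'-rspq'+rqq'-1)$. First I would form, for each such knot, its surgery dual $\kappa_{r,s}^{p,q}$ --- the core of the surgery solid torus, which sits inside $Y_{r,s}^{p,q}$. Its exterior is canonically homeomorphic to that of $K_{r,s}^{p,q}$, hence is hyperbolic, so $\kappa_{r,s}^{p,q}$ is a hyperbolic knot, and the ``reverse'' Dehn filling along it --- along the meridian slope of $K_{r,s}^{p,q}$ --- recovers the non-trivial connected sum $L(r,1)\#L(s,1)$. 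Whenever $|sp^2-rspq+rq^2|\geq 2$ the space $Y_{r,s}^{p,q}$ is a genuine lens space, and this already exhibits lens spaces with hyperbolic knots surgering to $L(r,1)\#L(s,1)$.

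Second, I must extract infinitely many pairwise non-homeomorphic lens spaces from the family $\{Y_{r,s}^{p,q}\}$. Since $|H_1(Y_{r,s}^{p,q})| = |sp^2-rspq+rq^2|$, and a lens space admits only finitely many homeomorphism types with a prescribed order of first homology, it suffices to see that $|sp^2-rspq+rq^2|$ takes infinitely many values along slopes for which $K_{r,s}^{p,q}$ is hyperbolic. For this I would return to the proof of Theorem~\ref{thm:main} and isolate an infinite set of integers $n$ for which the knot $K_{r,s}^{n,1}$ (the case $(p,q)=(n,1)$) is hyperbolic; along such a subfamily $sn^2-rsn+r$ is a quadratic in $n$ with leading coefficient $s\neq 0$, so its absolute value tends to infinity, is $\geq 2$ for large $n$, and assumes infinitely many distinct values. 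Thus infinitely many distinct orders of first homology --- hence infinitely many distinct lens spaces $Y_{r,s}^{n,1}$ --- occur among the hyperbolic examples, and passing to their surgery duals completes the argument.

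The hard part is this last step, the distinctness of the lens spaces. The geometric information available on its own --- unbounded volumes of the exteriors when $|r|,|s|\notin\{2,3\}$ --- only gives infinitely many distinct hyperbolic exteriors, not infinitely many distinct ambient lens spaces, since a priori a single lens space might host infinitely many hyperbolic knots of growing volume all surgering to the same reducible manifold. So one cannot conclude purely from the geometry; the order of $H_1$ of the surgered lens space must be controlled directly from the construction. Supplying such an explicit infinite subfamily of hyperbolic knots $K_{r,s}^{n,1}$ with $|sn^2-rsn+r|\to\infty$ closes the gap, and because this argument does not use the volume estimate it yields the corollary uniformly for all $(r,s)$ with $|r|,|s|>1$, including the cases where $|r|$ or $|s|$ lies in $\{2,3\}$.
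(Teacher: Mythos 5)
Your overall route is the paper's: the corollary is obtained by dualizing Theorem~\ref{thm:main}, replacing each hyperbolic $K_{r,s}^{p,q}\subset L(r,1)\#L(s,1)$ by the core of the surgery solid torus in the resulting lens space, whose exterior is the same hyperbolic manifold. You are also right to insist that the corollary asks for infinitely many \emph{lens spaces}, not merely infinitely many hyperbolic knots, so that one must verify that $|H_1|=|sp^2-rspq+rq^2|$ is unbounded along a subfamily of slopes whose knots are actually certified hyperbolic; the paper's one-line derivation (``viewing this dually'') leaves this implicit, and your remark that volumes alone cannot supply it is correct.

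The gap is in your choice of subfamily. The hyperbolicity established in the proof of Theorem~\ref{thm:main} applies to slopes $(p,q)$ admitting an odd-length expansion $q/p=[b_k,\dots,b_1]$ with $k\geq 3$ (so that the surgery description lives on $C_{2(k+1)}$ with at least eight components, as Lemma~\ref{lem:hypchainlinksurgery} requires) and with every $|b_i|$ exceeding the constant $\kappa$ from Thurston's Hyperbolic Dehn Surgery Theorem. The slope $(n,1)$ fails both options: $1/n=[-n]$ has length one (giving only $C_4$, which the lemma does not cover), and a short computation shows that any longer expansion of $1/n$ must contain an entry of absolute value at most $2$ --- at length three, $[a,b,c]=1/n$ forces $|b-1/c|=1/|a+n|$, hence essentially $[a,0,-(a+n)]$ --- so the condition $|b_i|>\kappa$ can never be met. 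Nothing in the paper therefore certifies that infinitely many $K_{r,s}^{n,1}$ are hyperbolic, and you offer no independent argument. The repair is immediate: run your homology count on the slopes the proof does handle, say $q_j/p_j=[j,\dots,j]$ of the appropriate fixed odd length. Since $q_j/p_j\to 0$ and $|p_j|\to\infty$ as $j\to\infty$, we get $|sp_j^2-rsp_jq_j+rq_j^2|=p_j^2\,|s-rs(q_j/p_j)+r(q_j/p_j)^2|\to\infty$ because $s\neq 0$, so infinitely many orders of first homology, and hence infinitely many lens spaces, occur among the hyperbolic examples. With that substitution your argument is complete.
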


Boyer--Zhang show that after $6$--surgery on one component of the Whitehead link, the other component is a hyperbolic knot in the lens space which admits a surgery to the non-prime manifold $L(-3,1)\#L(-2,1)$ \cite[Example 7.8]{BoyerZhang-CSseminorms}.  To our knowledge, the only other previously known examples of hyperbolic knots in lens spaces with surgeries to a non-prime manifold are the family of Eudave-Mu\~noz--Wu \cite{emwu} (which contains the Boyer--Zhang example) and the family of Kang \cite{kang}.   
  It turns out that both of these families arise from our construction as we prove in Section~\ref{sec:emwukang}.

\begin{theorem}\label{thm:subsume}
The manifolds $M_p$ of Eudave-Mu\~noz--Wu \cite{emwu} are exteriors of the knots $K^{p-1,p}_{-3,-2}$ in the manifold $L(-3,1)\#L(-2,1)$.

The manifolds $M_{p,q}$ of Kang \cite{kang} are exteriors of the the knots $K^{2,3}_{p,q-2}$ in the manifold $L(p,1)\#L(q-2,1)$.
\end{theorem}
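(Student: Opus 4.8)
The plan is to reduce Theorem~\ref{thm:subsume} to a direct comparison of diagrams, letting Theorems~\ref{thm:closedbraidbanding} and~\ref{thm:main} handle the arithmetic bookkeeping. First I would recall the explicit topological data defining the two families. By construction, $K_{r,s}^{p,q}$ is the surgery-dual of the lift, under the Montesinos trick, of a level banding arc $a_\beta^{p,q}$ on the braid closure $\widehat\beta$, where $\beta\in B_3$ is a fixed $3$--braid whose closure is the non-prime link double-covered by $L(r,1)\#L(s,1)$; Birman--Menasco's classification \cite{birmanmenasco} identifies $\widehat\beta$ as the relevant connected sum of $(2,r)$-- and $(2,s)$--torus links. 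On the other side, I would extract from \cite{emwu} and \cite{kang} the presentations of $M_p$ and $M_{p,q}$ as knot exteriors --- in each case given by a tangle/band replacement on a two-bridge link, or equivalently by a surgery description --- and observe that each such presentation, pushed down through the appropriate double branched cover, is exactly a banding in $S^3$ from a two-bridge link to a non-prime link of braid index $3$.

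The heart of the argument is then to match these two bandings. I would isotope the Eudave-Mu\~noz--Wu band (respectively the Kang band) into the normal form of Figure~\ref{fig:3sbto2br}, exhibiting it as a level arc $a_\beta^{P,Q}$ on $\widehat\beta$ for a specific conjugate of $\beta$ and a specific slope $(P,Q)$. For the EMW family I expect $\widehat\beta$ to be the closure determining $L(-3,1)\#L(-2,1)$ and the slope to be $(P,Q)=(p-1,p)$ (with $qp'-pq'=1$ solved by $p'=q'=1$); for the Kang family, $\widehat\beta$ determines $L(p,1)\#L(q-2,1)$ and the slope is $(P,Q)=(2,3)$ (again $p'=q'=1$). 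Having pinned down $\rho(\beta)=\mat{a}{b}{c}{d}$ and the slope, Theorem~\ref{thm:closedbraidbanding} outputs the two-bridge link $\b(\,\cdots)$ produced by the banding, and Theorem~\ref{thm:main} outputs the lens space $L(sP^2-rsPQ+rQ^2,\ \ldots)$; I would then check by a short computation, using the standard equivalences $L(n,k)\cong L(-n,-k)\cong L(n,k+n)$ to reconcile normalizations, that these agree with the two-bridge link and the lens space recorded by Eudave-Mu\~noz--Wu and by Kang.

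Finally, I would upgrade the resulting coincidence of surgered manifolds to an identification of knot exteriors. Here the Montesinos trick does genuine work rather than mere numerology: the homeomorphism between the two double branched covers of $S^3$ (with the matching bandings) carries the lift of one banding arc to the lift of the other, hence carries the surgery-dual knot $K_{r,s}^{P,Q}$ to the EMW (resp. Kang) knot and its exterior onto $M_p$ (resp. $M_{p,q}$). The main obstacle is the middle step: \cite{emwu} and \cite{kang} work in their own tangle/surgery conventions with their own choices of orientation and framing, so the substantive labor is a careful, convention-tracking sequence of isotopies bringing their bands into the form of a level arc on $\widehat\beta$. Once that normal form is reached, the remaining verification is exactly the routine arithmetic already packaged in Theorems~\ref{thm:closedbraidbanding} and~\ref{thm:main}.
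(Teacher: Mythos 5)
Your proposal is correct and follows essentially the same route as the paper: the paper's proof consists precisely of isotoping the Eudave-Mu\~noz--Wu tangle $\xi_p$ and the Kang tangle $T_{p,q}$ into the normal form of Figure~\ref{fig:definetangle}, reading off $\beta=\sigma_2^{-3}\sigma_1^{-2}$, $\gamma=\sigma_2^{-1}\sigma_1^{-p}$ with $[-1,-p]=p/(p-1)$ (respectively $\beta=\sigma_2^{p}\sigma_1^{q-2}$, $\gamma=\sigma_2^{0}\sigma_1^{2}\sigma_2^{2}$ with $[0,2,2]=3/2$), and passing to double branched covers --- exactly the slopes $(p-1,p)$ and $(2,3)$ you predicted. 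The only difference is cosmetic: the paper identifies the tangles directly (so the knot exteriors are literally double branched covers of the same tangle) and therefore does not need your extra step of reconciling the resulting lens spaces with those recorded in \cite{emwu} and \cite{kang}.
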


\begin{remark}\
\begin{enumerate}
\item The catalog of exceptional fillings of the Magic $3$--Manifold by Martelli-Petronio \cite{mp} and its extension to the Minimally Twisted $5$--Chain Link by Martelli-Petronio-Roukema \cite{mpr}  appear to offer no further examples of hyperbolic knots in lens spaces with non-prime surgeries.  
\item Note that knots in lens spaces may be presented as surgery on an unknotted component of two-component links.  Thus one may try  examining families of two-components links with at least one unknotted component that have non-prime surgeries to find examples of hyperbolic knots in lens spaces with non-prime surgeries.   The work of Goda-Hayashi-Song does this for two-bridge links \cite{ghs}, and the only hyperbolic knot they obtain is the one found by Boyer-Zhang as surgery on the Whitehead link.  Their other knots are all torus knots or cables of torus knots.
\end{enumerate}
\end{remark}

\subsection{The Lens Space Cabling Conjecture}

Contrast Corollary~\ref{cor:nonprime} with the Cabling Conjecture of Gonzales-Acu\~na--Short \cite{cablingconj} and Greene's confirmation of it in the special case of connected sums of lens spaces \cite{greene}.

\begin{conj}[The Cabling Conjecture, Gonzales-Acu\~na--Short \cite{cablingconj}]
If a knot in $S^3$ admits a surgery to a non-prime $3$--manifold, then the knot is either a torus knot or a cabled knot.
\end{conj}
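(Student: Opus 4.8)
The plan is to run the standard reduction program for reducing Dehn surgeries and then to try to upgrade the known special-case confirmations, in particular Greene's \cite{greene} treatment of connected sums of lens spaces, to an arbitrary reducible target. Let $K \subset S^3$ be a nontrivial knot and suppose some slope $p/q$ surgery produces a reducible manifold $M$, i.e.\ one containing an essential $2$--sphere. The objective is either to recognize $K$ as a torus knot or to produce an essential annulus in the exterior $X = S^3 \setminus \nbhd(K)$ whose boundary slope equals the surgery slope, which is exactly what it means for $K$ to be a cable knot surgered along its cabling slope.

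First I would normalize the problem. By Gordon--Luecke's theorem that only integral surgeries can yield reducible manifolds, the slope is an integer $n$, so $X$ is filled along the integral framing $n$. Since $H_1(M) \cong \Z/n\Z$, the value $n = 0$ is the only one that could produce an $S^1 \times S^2$ summand, and Gabai's proof of Property R rules this out for nontrivial $K$; hence $n \neq 0$ and I may write $M = M_1 \# M_2$ with each $M_i$ a rational homology sphere different from $S^3$. An essential sphere in $M$ then meets the surgery solid torus $V$ in meridian disks, and removing these leaves an essential planar surface $P \subset X$ with $\partial P$ a nonempty family of curves of slope $n$. Choosing $P$ to minimize its number of boundary components and putting it in thin position against a meridian disk of $V$ produces a pair of labelled intersection graphs, one in the sphere and one in $P$. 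I would then push the Gordon--Luecke graph combinatorics --- parity and orientation of edges, the Scharlemann-cycle analysis, and Euler-characteristic counting --- to force either a planar surface with very few boundary components, from which sutured-manifold theory in the style of Gabai yields an essential annulus of boundary slope $n$, or directly a Seifert-fibered or cable structure on $X$.

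The hard part, and the reason this statement remains a conjecture rather than a theorem, is the hyperbolic case: when $X$ is hyperbolic there is no a priori essential annulus to exploit, and the combinatorial bounds do not by themselves close. For torus and cable knots the annulus is handed to us, and Greene's argument \cite{greene} disposes of the case in which $M$ is a connected sum of lens spaces by showing that the changemaker lattice attached to the surgery must embed in the standard negative-definite diagonal lattice, then applying Donaldson's diagonalization theorem to eliminate every such embedding except the cable ones. To finish in general one would need either an unconditional bound on the number of boundary components of $P$ that feeds into sutured-manifold theory, or an extension of the Heegaard Floer $d$--invariant obstructions beyond lens-space summands to arbitrary reducible targets. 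I expect the latter --- controlling the correction terms of an arbitrary $M_1 \# M_2$ and forcing a lattice embedding incompatible with a hyperbolic exterior --- to be the central obstacle, and it is precisely here that the presently available techniques fall short.
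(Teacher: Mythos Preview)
The statement you are attempting to prove is not a theorem in the paper --- it is quoted there \emph{as a conjecture}, and the paper offers no proof of it. The Cabling Conjecture of Gonz\'alez-Acu\~na--Short is, to this day, open in general. The paper merely cites it for context, records Greene's confirmation in the special case where the reducible manifold is a connected sum of lens spaces, and then goes on to propose a generalization for knots in lens spaces. So there is no ``paper's own proof'' to compare your proposal against.

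Your write-up is not a proof either, and to your credit you say so explicitly: you note that ``the reason this statement remains a conjecture rather than a theorem'' is the hyperbolic case, and that ``presently available techniques fall short.'' What you have written is a reasonable survey of the standard reduction program (integrality of the surgery slope via Gordon--Luecke, exclusion of $S^1\times S^2$ summands via Gabai's Property R, the planar-surface/graph-combinatorics machinery, and the Heegaard Floer/lattice input from Greene), together with an honest acknowledgment of where the argument stalls. That is a fine overview of the state of the art, but it should not be labeled as a proof proposal for the conjecture itself. The genuine gap is exactly the one you identify: no one currently knows how to rule out a hyperbolic knot in $S^3$ admitting a reducible surgery whose summands are not both lens spaces.
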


\begin{theorem}[Greene \cite{greene}]
If surgery on a knot in $S^3$ produces a non-trivial connected sum of lens spaces, then the knot is either a torus knot or a cable of a torus knot.
\end{theorem}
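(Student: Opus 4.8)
The plan is to handle the non-hyperbolic cases by classical $3$--manifold topology, and the hyperbolic case by a gauge-theoretic lattice-embedding argument in the spirit of Greene's work on lens space surgeries.

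\textbf{The non-hyperbolic cases.} By the geometrization of knot exteriors a nontrivial knot in $S^3$ is a torus knot, a satellite, or hyperbolic. If $K$ is a torus knot we are done. If $K$ is a satellite, I would invoke Gordon's theorem on Dehn surgery on satellite knots: a reducible surgery forces $K = C_{a,b}(K')$ to be a cable with $|a|\geq 2$ along the cabling slope $ab$, with $S^3_{ab}(K) \cong L(a,b) \# S^3_{b/a}(K')$. For the right-hand side to be a connected sum of exactly two nontrivial lens spaces, $S^3_{b/a}(K')$ must itself be a lens space. If $K'$ is trivial then $K = T(a,b)$; otherwise $S^3_{b/a}(K')$ is a non-integral cyclic filling of a nontrivial knot exterior, so $K'$ is a torus knot by the Cyclic Surgery Theorem of Culler--Gordon--Luecke--Shalen. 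Either way $K$ is a torus knot or a cable of one. Since a non-satellite, non-hyperbolic knot in $S^3$ is a torus knot, it remains to show that a \emph{hyperbolic} knot admits no surgery to a connected sum of two nontrivial lens spaces.

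\textbf{Reduction to integral surgery and Donaldson diagonalization.} Suppose then $S^3_{p/q}(K) \cong L_1 \# L_2$ with $L_i = L(p_i,q_i)$, $p_i \geq 2$, and $K$ hyperbolic. Since $H_1 \cong \Z/p_1 \oplus \Z/p_2$ is finite, $p \neq 0$; since reducible surgeries on knots in $S^3$ are integral (Gordon--Luecke), $q=1$ and $n := p = p_1 p_2$ with $\gcd(p_1,p_2)=1$. Mirroring $K$ and the $L_i$ if necessary, take $n > 0$. Let $W = W_n(K)$ be the surgery trace, so $\partial W \cong L_1 \# L_2$ and $Q_W = (n)$ is positive definite. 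Now $L_1 \# L_2$ also bounds a negative-definite $4$--manifold $V$, namely the boundary connected sum of the canonical linear plumbings of the two summands, whose intersection lattice is the orthogonal direct sum $\Lambda_1 \oplus \Lambda_2$ of two \emph{linear lattices}. Then $Z := W \cup_{L_1 \# L_2} (-V)$ is a closed positive-definite $4$--manifold with $b_2(Z) = 1 + \operatorname{rank}\Lambda_1 + \operatorname{rank}\Lambda_2 =: N$, so by Donaldson's diagonalization theorem $Q_Z \cong \langle 1\rangle^{\oplus N}$. Pushing the inclusion $H_2(-V) \hookrightarrow H_2(Z) \cong \Z^N$ forward realizes $\Lambda_1 \oplus \Lambda_2$ inside the standard lattice as a finite-index sublattice of $\sigma^\perp$, where $\sigma$ is the class dual to the $2$--handle, $\sigma \cdot \sigma = n$; comparing the linking form of $S^3_n(K)$ computed from $W$ and from $V$ promotes this to an isometry $\sigma^\perp \cong \Lambda_1 \oplus \Lambda_2$.

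\textbf{The changemaker condition and the combinatorial heart.} Lens spaces are L-spaces, hence so is $L_1 \# L_2$, hence $K$ is an L-space knot, so its knot Floer homology and Alexander polynomial take the rigid ``staircase'' form. Feeding this through the Ozsv\'ath--Szab\'o mapping-cone formula for $\widehat{HF}(S^3_n(K))$ and comparing the correction terms coming from $W$ and from $V$, I would conclude that $\sigma$ can be taken to be a \emph{changemaker vector}: in suitable coordinates $\sigma_1 \leq \sigma_2 \leq \cdots \leq \sigma_N$ with $\sigma_1 \leq 1$ and $\sigma_j \leq 1 + \sigma_1 + \cdots + \sigma_{j-1}$ for all $j$. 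The task then reduces to a purely combinatorial one: classify the changemaker vectors in a standard diagonal lattice whose orthogonal complement is an orthogonal direct sum $\Lambda_1 \oplus \Lambda_2$ of two nontrivial linear lattices. Such configurations do occur -- for instance $\sigma = (1,1,2)$, arising from $6$--surgery on the trefoil, with $\sigma^\perp \cong \langle 2\rangle \oplus \langle 3\rangle$ and $L_1 \# L_2 = L(2,1) \# L(3,2)$ -- but one shows they are \emph{all} realized by the cabling-slope surgery on a torus knot, or on an iterated cable of a torus knot. A companionship analysis of the surgery-dual knot $\widetilde K \subset L_1 \# L_2$ with respect to a reducing sphere then forces $K$ itself to be such a knot, contradicting the hyperbolicity of $K$ and completing the proof.

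\textbf{The main obstacle.} The heart of the matter is the combinatorial classification of changemaker vectors with orthogonal complement a direct sum of linear lattices; this is the technical core of Greene's work, proved by a delicate induction on the coordinates of $\sigma$ together with a careful analysis of the standard bases of linear lattices. The other non-routine ingredients are the extraction of the changemaker condition from the Heegaard-Floer data and the concluding companionship step that upgrades lattice-theoretic information to a statement about $K$ itself; by contrast the reductions via Gordon, Gordon--Luecke, the Cyclic Surgery Theorem, and geometrization are standard.
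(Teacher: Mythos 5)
This statement is quoted in the paper as Greene's theorem and is cited to \cite{greene} without proof, so there is no argument in the paper itself to compare against; the honest benchmark is Greene's published proof. Measured against that, your outline is a faithful reconstruction of the actual strategy: the reduction of the non-hyperbolic cases via geometrization, Gordon's satellite theorem, and the Cyclic Surgery Theorem is correct and essentially verbatim what one does; the reduction to integral surgery via Gordon--Luecke, the Donaldson diagonalization of the trace glued to the negative-definite linear plumbing, and the changemaker/linear-lattice framework are exactly the ingredients Greene uses.

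That said, as a proof the proposal has real gaps, two of which you flag yourself and one you do not. First, the extraction of the changemaker condition from the mapping-cone formula and the combinatorial classification of changemaker vectors $\sigma$ with $\sigma^\perp$ an orthogonal sum of two nontrivial linear lattices are named but not carried out; these are the technical core, occupying most of Greene's paper, and nothing in your sketch substitutes for them. Second, your concluding step --- ``a companionship analysis of the surgery-dual knot forces $K$ to be a cable, contradicting hyperbolicity'' --- is not how the argument actually closes, and as stated it is circular: the lattice embedding only determines the Alexander polynomial and genus of $K$, not a companion torus for it. Greene instead closes the loop numerically, by playing the Ozsv\'ath--Szab\'o bound $2g(K)-1 \leq p$ for positive L-space surgeries against the Matignon--Sayari bound $p \leq 2g(K)-1$ for reducible surgeries on non-cabled knots, and then showing that the Alexander polynomials forced by the lattice classification are those of torus knots and cables, for which the reducing slope $p = ab$ violates $p = 2g(K)-1$. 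Without some such mechanism converting the lattice-theoretic output into a contradiction with hyperbolicity, the hyperbolic case remains open in your write-up.
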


The Cabling Conjecture fails, however, when lens spaces replace $S^3$ in its statement.  This failure was initially demonstrated by the example of Boyer-Zhang.
Yet since our knots of Theorem~\ref{thm:main} encompass all known examples of the failure of the Cabling Conjecture for lens spaces, we propose the following generalization.
\begin{conj}[The Lens Space Cabling Conjecture]
Assume a knot $K$ in a lens space $L$ admits a surgery to a non-prime $3$--manifold $Y$.  If $K$ is hyperbolic, then $Y = L(r,1)\#L(s,1)$, for some slope $(p,q)$ the surgery dual is the knot $K_{r,s}^{p,q} \subset Y$, and $L$ is the lens space of Theorem~\ref{thm:main}.  Otherwise either $K$ is a torus knot, a Klein bottle knot, or a cabled knot and the surgery is along the boundary slope of an essential annulus in the exterior of $K$, or $K$ is contained in a ball.
\end{conj}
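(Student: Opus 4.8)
We are unable to prove this conjecture; what follows is a program, with an indication of where the real difficulty lies. Write $E = E(K)$, which is also the exterior of the surgery-dual knot $\mathcal{K}\subset Y$; let $\mu$ be the meridian of $K$, so that $\mu$--filling $E$ returns $L$, and let $\gamma$ be the surgery slope, so that $\gamma$--filling $E$ gives $Y$. The plan is to split on whether $E$ is hyperbolic, handling the non-hyperbolic case by classical Dehn-surgery technology. If $E$ is reducible then, since lens spaces---and, after dispatching $L = S^1\times S^2$ by hand, all the manifolds in sight---are irreducible, a reducing sphere cannot survive the $\mu$--filling, so it bounds a ball in $L$ on the side meeting $K$ and $K$ lies in a ball. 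If $E$ is irreducible but not hyperbolic it is small Seifert fibered or toroidal. In the Seifert fibered case one would enumerate the fiberings of $E$ that admit both a lens-space filling and a reducible filling; in each such the reducible filling is along the regular-fiber slope, which is the boundary slope of an essential vertical annulus, and $K$ is forced to lie on a Heegaard torus, to lie on a Klein bottle, or to be a cabled knot. In the toroidal case an essential torus $T\subset E$ must compress after the $\mu$--filling and after the $\gamma$--filling, as lens spaces and their connected sums are atoroidal; the standard analysis of a torus compressing along two slopes forces those slopes to distance one, exhibits $T$ as cobounding a solid torus containing $K$ or as the boundary of a neighborhood of a knot, and again delivers a cabled knot with surgery along the boundary slope of an essential annulus.

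The substance of the conjecture is the hyperbolic case, where one would argue as follows. First one shows $\Delta(\mu,\gamma) = 1$; for a hyperbolic exterior this should follow from the Dehn-filling distance estimates between a reducible slope and a cyclic slope, and it matches the integrality asserted in Theorem~\ref{thm:main}, so that dually $\mathcal{K}\subset Y$ admits an integral surgery to $L$. Choose a reducing sphere $S\subset Y$ meeting $\mathcal{K}$ minimally, in $2n$ points, so that $P = S\cap E$ is an essential planar surface with $2n$ boundary curves of slope $\gamma$; here $n = 1$ is excluded, since it would make $P$ an essential annulus in the hyperbolic manifold $E$. Feeding $P$ and the $\mu$--filling through the intersection-graph machinery for Dehn fillings---which is very rigid at distance one---and using that $H_1(E)$ has rank one while $H_1(L)$ is cyclic, the goal is to force $Y$ to be a connected sum of exactly two lens spaces $L(r,1)$ and $L(s,1)$. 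It would then remain to recognize $\mathcal{K}$ as $K_{r,s}^{p,q}$. The route one would take is the Montesinos trick: $L(r,1)\#L(s,1)$ is the double branched cover of $S^3$ over the connected sum $\b(r,1)\#\b(s,1)$ of two $(2,\ast)$--torus links, a non-prime link of braid index three, and $L$ is a double branched cover over a two-bridge link; producing the covering involution---say by equivariance, via the orbifold theorem together with Smith-theoretic input from $S$ and the surgery---one arranges that the surgery producing $L$ descends to a band move from $\b(r,1)\#\b(s,1)$ to a two-bridge link, which by Birman--Menasco's classification of non-prime braid-index-three links \cite{birmanmenasco} and Theorem~\ref{thm:closedbraidbanding} is realized by one of the arcs $a_\beta^{p,q}$. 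Then $\mathcal{K} = K_{r,s}^{p,q}$ and $L$ is the lens space of Theorem~\ref{thm:main}.

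The decisive obstacle will be this hyperbolic case: at present there is no classification of hyperbolic knot exteriors that simultaneously admit a lens-space filling and a reducible filling at distance one. Over $S^3$ the analogous classification is controlled by Greene's changemaker-lattice obstruction, which rests on Donaldson's diagonalization theorem and is sharpest precisely because the intersection forms that arise there are unimodular; the hard part will be to adapt that Heegaard Floer and gauge-theoretic obstruction to the present setting, where $\widehat{HF}$ of the nontrivial ambient lens space $L$ and of $L(r,1)\#L(s,1)$ must both enter the integer surgery formula, and to extract from it \emph{exactly} the family $K_{r,s}^{p,q}$ rather than a finite enlargement of it. By contrast, the reducible and non-hyperbolic cases should reduce to existing Dehn-surgery technology together with a careful but essentially routine enumeration of the Seifert fibered and toroidal configurations peculiar to lens-space ambients---notably the Klein bottle knots.
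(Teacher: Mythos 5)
This statement is a \emph{conjecture}, and the paper offers no proof of it; your decision not to claim one is the correct call, and your identification of the hyperbolic case as the genuinely open part matches the state of the art. What the paper does prove is the non-hyperbolic half (Theorem~\ref{thm:nonhyp}), by first invoking Boyer--Zhang (Theorem~\ref{thm:bz}) to reduce to integral surgeries, and then treating the Seifert fibered exteriors in Lemma~\ref{lem:seifertfibered} (via Baker--Buck's classification of such knots as torus or Klein bottle knots and Buck--Mauricio's result that the reducible slope is the regular fiber) and the toroidal exteriors in Lemma~\ref{lem:satellite}. Your sketch of the reducible and Seifert fibered cases lines up with this.

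One concrete point where your program would fail as written: in the toroidal case you assert that an essential torus $T$ in the exterior must compress after the $\gamma$--filling ``as lens spaces and their connected sums are atoroidal.'' But the conjecture only assumes $Y$ is non-prime, not that it is a connected sum of lens spaces, so you cannot conclude $T$ compresses in $Y$; indeed the paper must separately dispose of the possibility that the piece $M'$ containing the dual knot is irreducible and boundary-incompressible (case (d) of Scharlemann's trichotomy in Lemma~\ref{lem:satellite}, handled by isotoping $T$ off the essential sphere and deriving a contradiction from irreducibility of both complementary pieces). The paper's route through Scharlemann's classification of reducible surgeries on satellite knots, combined with Gordon's winding-number constraint and Gordon--Luecke to eliminate the $0$-- and $1$--bridge braid cases, is the machinery that actually closes this case; the bare ``torus compressing along two slopes at distance one'' analysis you gesture at does not by itself rule out the non-cabled configurations. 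Your remarks on the hyperbolic case (distance one via Boyer--Zhang, the planar surface from a reducing sphere, and the absence of a changemaker-type obstruction over a lens space base) are a reasonable assessment of why the conjecture remains open, but they are, as you say, a program rather than a proof.
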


In a lens space let us say a {\em torus knot} is a knot that embeds in the Heegaard torus while a {\em Klein bottle knot} is a knot that embeds in a Klein bottle with annular complement; see Section~\ref{sec:nonhyp}.

\begin{remark}
Due to the manifold $S^1 \times S^2$, we specify that $Y$ is non-prime rather than reducible in the conjecture.  There are many more hyperbolic knots in lens spaces that admit surgeries to $S^1 \times S^2$ than discussed in this article.  See \cite{bbl} for our conjectural classification of knots in lens spaces with surgeries to $S^1 \times S^2$.
\end{remark}

One may show that this encompasses a conjecture of Mauricio.
\begin{conj}[Mauricio \cite{maurothesis}]
If $K$ is a knot in $L(p,q) \neq S^3, \RP^3$ with irreducible exterior, then surgery on $K$ never produces $L(p,q) \# L(t,1)$.
\end{conj}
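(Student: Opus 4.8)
The plan is to deduce Mauricio's conjecture from the Lens Space Cabling Conjecture. So assume that conjecture, and suppose for contradiction that $K$ is a knot in $L=L(p,q)\neq S^{3},\RP^{3}$ with irreducible exterior $X=L\setminus\mathring N(K)$ and that some surgery on $K$ produces $Y=L(p,q)\#L(t,1)$. We may assume $L(t,1)$ is non-trivial, so that $Y$ is a non-trivial connected sum (the case $Y=L(p,q)$ is outside the reach of the hypothesis we are using). Applying the Lens Space Cabling Conjecture to the triple $(K,L,Y)$ leaves three configurations for $K$, each of which we rule out.

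Suppose first that $K$ lies in a ball $B\subset L$. Then the sphere $\partial B$ separates $X$ into the piece $B\setminus\mathring N(K)$, which is not a ball because it has the extra boundary torus $\partial N(K)$, and the piece $L\setminus\mathring B$, which is not a ball because $L\neq S^{3}$. Hence $\partial B$ is an essential sphere in $X$, so $X$ is reducible, contrary to hypothesis. This eliminates the ``ball'' case.

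Suppose next that $K$ is hyperbolic. The conjecture then gives $Y=L(r,1)\#L(s,1)$, identifies the surgery dual of $K$ with the hyperbolic knot $K_{r,s}^{u,v}\subset Y$ of Theorem~\ref{thm:main} for some slope $(u,v)$, and asserts that $L$ is the lens space $L(su^{2}-rsuv+rv^{2},\; suu'-rsuv'+rvv'-1)$ of Theorem~\ref{thm:main}, where $vu'-uv'=1$. Comparing the two prime decompositions of $Y$, all of whose summands are prime, uniqueness of the prime decomposition forces $\{L(p,q),L(t,1)\}=\{L(r,1),L(s,1)\}$ up to homeomorphism; in particular $L=L(p,q)$ is a connected summand of $L(r,1)\#L(s,1)$, homeomorphic to some $L(n,1)$ with $|n|\in\{|r|,|s|\}$. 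It therefore remains to show that when $K_{r,s}^{u,v}$ is hyperbolic the lens space of Theorem~\ref{thm:main} is never a connected summand of $L(r,1)\#L(s,1)$. I would establish this by analyzing the Diophantine condition $|su^{2}-rsuv+rv^{2}|\in\{|r|,|s|\}$ over coprime pairs $(u,v)$: its solutions form a short explicit list of ``degenerate'' slopes — essentially those at $0$, $\infty$, and $\pm1$, together with a few sporadic ones that occur only when $|r|$ or $|s|$ is small — and for each such slope the braid-and-band presentation behind $K_{r,s}^{u,v}$, via Birman--Menasco's classification of non-prime $3$-braids, collapses so that $K_{r,s}^{u,v}$ is a torus knot or a cable rather than a hyperbolic knot, contradicting the case. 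I expect this to be the main obstacle; care is needed with the excluded ranges $|r|,|s|\in\{0,2,3\}$ and the excluded targets $S^{3},\RP^{3}$ so that the list of degenerate slopes is complete and honest.

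Finally, suppose $K$ is a torus knot, a Klein bottle knot, or a cabled knot, with the surgery along the boundary slope $\gamma$ of an essential annulus $A\subset X$. Then $A$ is a vertical annulus in the characteristic Seifert-fibered or twisted $I$-bundle piece of the irreducible manifold $X$, and filling $X$ along $\gamma$ caps $A$ off. In the Klein bottle case $X(\gamma)$ is then $\RP^{3}\#\RP^{3}=L(2,1)\#L(2,1)$ or a prism manifold, so matching $X(\gamma)=Y$ to $L(p,q)\#L(t,1)$ would force $L(p,q)=\RP^{3}$, which is excluded. In the torus-knot and cable cases $Y=X(\gamma)$ is a connected sum of two lens spaces produced by a Seifert-fiber filling; writing this decomposition explicitly from the Seifert invariants of $X$ and the companion filling, one checks that the manifold obtained from $X$ by the original meridian filling, namely $L(p,q)$ itself, cannot be one of the two lens-space summands unless the companion knot lies in a ball — whence $X$ is reducible, a contradiction — or $L(p,q)\in\{S^{3},\RP^{3}\}$, again excluded. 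With all three configurations eliminated, Mauricio's conjecture follows. The bookkeeping in this last case is more routine than the hyperbolic case, but it still relies on the Seifert structure of $X$ and on the irreducibility of $X$ to keep the companion out of a ball.
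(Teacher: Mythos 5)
First, note that the paper does not actually prove this statement: it is recorded as a conjecture of Mauricio, and the surrounding text only asserts, without argument, that the Lens Space Cabling Conjecture ``encompasses'' it. So what you are attempting is to supply that missing derivation, i.e.\ a proof conditional on the LSCC; that is a reasonable reading of the task, and your treatment of the ``contained in a ball'' case is correct. But the hyperbolic case --- which you yourself flag as the main obstacle --- contains a genuine gap, and it is precisely the step that makes the implication nontrivial. You assert that the coprime solutions of $|su^2-rsuv+rv^2|\in\{|r|,|s|\}$ form ``a short explicit list of degenerate slopes.'' That is false in general: for $rs<0$ or $rs>4$ the form $su^2-rsuv+rv^2$ is indefinite, so this is a Pell-type equation and typically has infinitely many coprime solutions. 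For instance, with $r=s$ the condition becomes $u^2-ruv+v^2=\pm1$, which for $|r|\geq 3$ has infinitely many coprime solutions (e.g.\ $(u,v)=(2,1),(13,5),\dots$ for $r=3$, or $(1,0),(7,1),\dots$ for $r=7$), and in each such case the lens space $L(su^2-rsuv+rv^2,\cdot)$ of Theorem~\ref{thm:main} has the same order of first homology as a summand of $L(r,1)\#L(s,1)$. Your argument would then need to show that every one of these infinitely many knots $K^{u,v}_{r,r}$ fails to be hyperbolic, or else that the second surgery parameter never matches up to homeomorphism of lens spaces (where orientation conventions must be pinned down); neither your sketch nor anything in the paper establishes this, and if even one such knot were hyperbolic the implication LSCC $\Rightarrow$ Mauricio could outright fail. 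So the premise on which your hyperbolic case rests (finiteness of the degenerate slopes) is incorrect, not merely unproven.

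The torus-knot/cable/Klein-bottle case is likewise only gestured at (``one checks''), though there the required computation with Seifert invariants is at least plausibly routine and runs parallel to Lemmas~\ref{lem:seifertfibered} and~\ref{lem:satellite}. As it stands, the proposal is a programme rather than a proof, and its central number-theoretic claim needs to be replaced, not just completed.
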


 Boyer-Zhang establish a foundation for the Lens Space Cabling Conjecture.
 \begin{theorem}[Boyer-Zhang, Corollary 1.4  \cite{BoyerZhang-CSseminorms}]\label{thm:bz}
 If a knot in a lens space admits a Dehn surgery to a reducible manifold, then either the surgery is integral or the exterior of the knot is Seifert fibered or reducible.
 \end{theorem}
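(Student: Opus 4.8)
The plan is to reduce the statement to, and then run, the Culler--Shalen seminorm machinery underlying the Cyclic Surgery Theorem, with the reducible filling playing a role analogous to a second cyclic filling. Write $M$ for the exterior of $K$ in the lens space $L$, let $\mu$ be the meridian slope of $K$ so that $M(\mu)=L$, and let $\alpha$ be the slope for which $M(\alpha)$ is reducible. We may assume that $M$ is irreducible and not Seifert fibered, since otherwise one of the alternatives in the conclusion already holds. It follows that $\partial M$ is incompressible, because an irreducible $3$--manifold with compressible torus boundary is a solid torus, which is Seifert fibered. Since $M(\mu)=L$ is irreducible while $M(\alpha)$ is reducible, the slopes $\mu$ and $\alpha$ are distinct, so $\Delta(\mu,\alpha)\geq 1$; it therefore suffices to prove $\Delta(\mu,\alpha)\leq 1$, which is exactly the assertion that the surgery is integral.

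Suppose first that $M$ is atoroidal. Combined with irreducibility, incompressibility of $\partial M$, and non-Seifert-fiberedness, geometrization makes $M$ hyperbolic. I would then choose a one-dimensional component $X_0$ of the $\PSL_2(\C)$--character variety of $\pi_1(M)$ containing the character of the discrete faithful representation and form the associated Culler--Shalen seminorm $\|\cdot\|=\|\cdot\|_{X_0}$ on $H_1(\partial M;\R)$. Since $M$ is not Seifert fibered this is a norm, whose unit ball is a finite-sided convex, centrally symmetric polygon; set $s=\min\{\|\gamma\|:\gamma \text{ a slope}\}$. The two key inputs are that $\|\mu\|=s$, because $\pi_1(M(\mu))=\pi_1(L)$ is cyclic and so supports no irreducible $\PSL_2(\C)$--representation, and that $\|\alpha\|=s$, because $M(\alpha)$ is reducible and so likewise cannot ``absorb'' the curve $X_0$. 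The fundamental seminorm inequality then forces $\Delta(\mu,\alpha)\leq 1$ once the possibility $\Delta(\mu,\alpha)\geq 2$ is excluded: in that case the norm ball would be pushed into a degenerate quadrilateral configuration that, by the Culler--Gordon--Luecke--Shalen analysis, arises only when $M$ is Seifert fibered, contrary to hypothesis.

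It remains to treat the case that $M$ contains an essential torus $T$. Here I would pass to the JSJ decomposition of $M$ and concentrate on the piece $P$ meeting $\partial M$, which is either hyperbolic or Seifert fibered. Filling along $\mu$ produces the atoroidal lens space $L$, while filling along $\alpha$ produces a reducible manifold, so in each case $T$ must compress or become boundary-parallel after filling -- a strong constraint. Combining the classification of exceptional Dehn fillings of $P$ with the known results on Dehn fillings of toroidal manifolds that yield lens spaces or reducible manifolds -- in particular the cabling phenomenon when $P$ is a cable space, together with the distance bounds of Gordon, Oh, Wu and others -- one again obtains $\Delta(\mu,\alpha)\leq 1$, or else exhibits $M$ itself as reducible or Seifert fibered.

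I expect the toroidal case to be the principal obstacle. For hyperbolic $M$ the seminorm step is largely an invocation of existing machinery, and the opening reductions are routine, but when $M$ is toroidal there is no single clean invariant to exploit and one must assemble several combinatorial Dehn-filling results while carefully tracking how an essential torus degenerates under two different fillings. A secondary technical point is the equality $\|\alpha\|=s$ for the reducible filling: unlike a cyclic group, the fundamental group of a reducible $3$--manifold can carry irreducible $\PSL_2(\C)$--representations, so one must verify that the trace functions associated to $\alpha$ do not vanish identically on $X_0$, equivalently that $X_0\not\subset X(M(\alpha))$, handling the exceptional case where they do by a direct argument using an essential sphere in $M(\alpha)$.
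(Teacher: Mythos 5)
A point of orientation first: the paper does not prove this statement --- it is quoted verbatim from Boyer--Zhang \cite{BoyerZhang-CSseminorms} and used as a black box --- so there is no internal proof to compare against. Your proposal is therefore a reconstruction of the Boyer--Zhang argument, and you have correctly identified its engine: Culler--Shalen seminorms on a curve $X_0$ in the $\PSL_2(\C)$--character variety, with the meridional filling supplying $\|\mu\|=s$ because $\pi_1(L)$ is cyclic, together with the right opening reductions and the right split into atoroidal and toroidal cases.

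As written, though, the argument has two genuine gaps. The first is the step $\|\alpha\|=s$ for the reducible filling slope. Your justification (``$M(\alpha)$ cannot absorb the curve $X_0$'') and your closing caveat address only whether $f_\alpha$ vanishes identically on $X_0$; but $\|\alpha\|$ exceeds $s$ as soon as $f_\alpha$ acquires \emph{extra zeros at ordinary points} of $X_0$, and since $\pi_1(M(\alpha))$ is a nontrivial free product it carries many irreducible $\PSL_2(\C)$--characters, so such zeros cannot be excluded the way they are for a cyclic filling. The actual route is different: a reducing sphere in $M(\alpha)$ chosen to meet the surgery core minimally yields an essential planar surface in $M$ with boundary slope $\alpha$, so $\alpha$ is a boundary slope, and one must then run the ideal-point/boundary-slope analysis of the seminorm rather than the cyclic-filling lemma; this is where the real content of \cite{BoyerZhang-CSseminorms} lives and your sketch does not supply it. (A smaller inaccuracy: once $\|\mu\|=\|\alpha\|=s$ is known, excluding $\Delta(\mu,\alpha)\geq 2$ is a convexity/geometry-of-numbers argument --- the norm ball would contain a nonzero lattice point of norm less than $s$ --- not a ``degenerate quadrilateral forces Seifert fibered'' dichotomy.) The second gap is the toroidal case, which you leave as an assembly of unspecified distance bounds from the literature. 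A concrete way to close it, consistent with what this paper does elsewhere (Lemma~\ref{lem:satellite}), is Scharlemann's theorem on reducible surgeries in the presence of an essential torus: together with Gordon's winding-number analysis it either exhibits $K$ as cabled with the surgery along the (integral) cabling-annulus slope, or forces the companion torus to be boundary-parallel. As it stands your text is a plausible plan for re-deriving the cited theorem, not a proof of it.
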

 
This allows us to reduce the conjecture to integral surgeries on hyperbolic knots.
\begin{theorem}\label{thm:nonhyp}
Non-hyperbolic knots in lens spaces satisfy the Lens Space Cabling Conjecture.
\end{theorem}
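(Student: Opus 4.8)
The plan is to show that a non-hyperbolic knot $K$ in a lens space $L$ admitting a surgery to a non-prime manifold falls into one of the enumerated cases of the Lens Space Cabling Conjecture: $K$ is a torus knot, a Klein bottle knot, or a cabled knot with the surgery along the boundary slope of an essential annulus in the exterior, or $K$ lies in a ball. Since the hyperbolic alternative of the conjecture does not arise here, we need only verify the ``otherwise'' clause. First I would invoke Theorem~\ref{thm:bz} of Boyer--Zhang: a surgery on a knot in a lens space producing a reducible manifold is either integral, or the exterior $X = X(K)$ is Seifert fibered, or $X$ is reducible. (One must handle the case that $Y$ is non-prime but has an $S^1\times S^2$ summand separately, but the reducibility conclusions of Theorem~\ref{thm:bz} apply whenever $Y$ is reducible, which is the case once $Y$ is non-prime and not $S^3$.) This splits the argument into three regimes.

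If $X$ is reducible, then $L$ itself is reducible, forcing $L = S^1\times S^2$ or $L$ non-prime, contradicting that $L$ is a lens space --- unless the reducing sphere bounds a ball in $L$ containing $K$, i.e. $K$ lies in a ball. So in the reducible-exterior case we land in the ``contained in a ball'' clause. If $X$ is Seifert fibered, I would analyze the possible Seifert fibrations of a knot exterior in a lens space: the base orbifold is planar with few exceptional fibers, and such $X$ embedded in a lens space forces $K$ to be a torus knot, a Klein bottle knot, or a cable knot in $L$ --- this is precisely the content of Section~\ref{sec:nonhyp} where these notions are set up, and the surgery slope producing a reducible manifold must be the fiber slope, which is the boundary slope of the essential (vertical) annulus, matching the conjecture's prescription. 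If the exterior is neither reducible nor Seifert fibered, then since $K$ is non-hyperbolic, $X$ is toroidal: it contains an essential torus. Here the surgery is integral by Theorem~\ref{thm:bz}, and I would cut along an innermost essential torus and use the structure theory of toroidal surgeries (e.g. Gordon--Luecke-type results, or the fact that an essential torus in $X$ compresses or becomes boundary-parallel after the filling since $Y$ is non-prime) to locate an essential annulus whose boundary slope is the surgery slope, again yielding the cable/annulus case.

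The main obstacle I anticipate is the toroidal case: controlling exactly how an essential torus in the knot exterior interacts with both the lens space structure on one side and the non-prime surgered manifold on the other. The key mechanism is that a reducing sphere in $Y$ must intersect the surgery solid torus in meridian disks; pushing this sphere back into $X$ and combining with the essential torus via a standard innermost-curve/outermost-arc exchange should produce an essential annulus in $X$ meeting the surgery slope in the expected way, whence $K$ is cabled (or a torus/Klein bottle knot). Throughout I would lean on the already-established Heegaard-torus and Klein-bottle characterizations from Section~\ref{sec:nonhyp} to name the resulting knot types, and on Theorem~\ref{thm:bz} to guarantee integrality of the slope wherever the exterior is not Seifert fibered or reducible, so that ``boundary slope of an essential annulus'' is the only consistent conclusion.
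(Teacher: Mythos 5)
Your overall skeleton matches the paper's: dispose of the ``contained in a ball'' case, then split the remaining (irreducible-exterior) case via Geometrization into Seifert fibered versus toroidal, handling the Seifert fibered case by the classification of such knots as torus/Klein bottle knots with fiber-slope surgery (this is exactly Lemma~\ref{lem:seifertfibered}). The paper does not need Theorem~\ref{thm:bz} as the top-level case division --- Geometrization alone gives the trichotomy --- but that is a cosmetic difference.

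The genuine gap is in the toroidal case, which you correctly identify as the main obstacle but then only sketch. Your proposed mechanism --- push a reducing sphere of $Y$ back into $X$, intersect it with an essential torus, and run an innermost-curve/outermost-arc exchange to produce an essential annulus at the surgery slope --- is not an argument that closes, and it is not what the paper does. The paper's Lemma~\ref{lem:satellite} instead takes an essential torus $T$ bounding a piece $M$ containing $K$ and applies Scharlemann's classification of reducible surgeries on knots in such a piece, which yields four structured outcomes: (a) $M$ and $M'$ are solid tori with $K$ a $0$-- or $1$--bridge braid; (b) $M = S^1\times D^2 \# L$ with $K$ a connected sum involving a core of $L$; (c) $K$ is cabled with the surgery along the cabling annulus; (d) $M'$ irreducible and $\bdry$--incompressible. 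Only (c) is the desired conclusion, and ruling out (a) and (b) is where the real work lies: one must show the induced surgery on the companion core is integral (via Theorem~\ref{thm:bz} and Buck--Mauricio for (a), via Gordon--Luecke's reducible surgery theorem for (b)), then invoke Gordon's result that an integral induced slope forces winding number $1$, which makes $T$ boundary-parallel and hence not essential; case (d) is killed by irreducibility of both complementary pieces. Without Scharlemann's theorem and the winding-number argument, your proposal cannot exclude, say, a non-cabled $1$--bridge braid in a solid torus neighborhood of a torus knot whose surgery reduces the ambient manifold, so the toroidal case remains unproven as written.
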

\begin{proof}
If $K$ is a non-hyperbolic knot in a lens space that is not contained in a ball, then the exterior of $K$ is irreducible and either Seifert fibered or toroidal by Thurston's Geometrization for Haken manifolds \cite{thurston-geometrization}. 
Lemma~\ref{lem:seifertfibered} addresses the knots in lens spaces with Seifert fibered exteriors.
 Lemma~\ref{lem:satellite} addresses the knots in lens spaces with toroidal exteriors.
\end{proof}

\subsection{Doubly primitive knots and simple knots}
To find hyperbolic knots in lens spaces with surgeries giving non-prime manifolds, we searched for hyperbolic doubly primitive knots in non-prime manifolds. 
A {\em doubly primitive} presentation of a knot $K$ in a manifold $Y$ is an embedding of $K$ in a genus $2$ Heegaard surface $F$ for $Y$ such that the handlebodies bounded by $F$ each have a compressing disk whose boundary $K$ transversally intersects once.  Surgery on $K$ along slope induced by $F$, the doubly primitive surgery, then produces a lens space.  Berge introduced this property as a means of creating knots in $S^3$ with lens space surgeries  \cite{berge}. Saito provides a survey of that work in the appendix of his article about the surgery duals to doubly primitive knots \cite{saito}. 

It is a simple observation now that a non-prime manifold may contain a doubly primitive knot only if it has Heegaard genus $2$ and is therefore a connected sum of lens spaces other than $S^3$.    Based on our experience with the hyperbolicity of knots that embed in the fibers of genus one fibered knots \cite{bakergofk}, the knots $K^{p,q}_{r,s}$ were a natural class to derive.  They each embed in the fiber of a genus one fibered knot in $L(r,1)\#L(s,1)$.   Because of the following lemma, these are the only doubly primitive knots in non-prime manifolds of this type.

\begin{lemma}
The only non-prime manifolds contain a genus one fibered knot are the manifolds $L(r,1) \# L(s,1)$ where $r,s \neq \pm1$.
These genus one fibered knots are the plumbings of the $r$--Hopf band in $L(r,1)$ and the $s$--Hopf band in $L(s,1)$.
\end{lemma}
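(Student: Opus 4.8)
The plan is to pass to the double branched cover. Via the Birman--Hilden correspondence the once-punctured torus is the double cover of a disc branched over three marked points, its mapping class group rel boundary is the braid group $B_3$, and every mapping class is isotopic to one commuting with the hyperelliptic involution. Thus a genus one fibered knot $K$ in a manifold $Y$, with once-punctured torus page and monodromy $\psi$, has exterior equal to the double cover of $D^2\times S^1$ branched over the closed $3$--braid $\widehat\beta$ of the braid $\beta\in B_3$ underlying $\psi$; filling the binding slope and capping off to form $S^3$ then identifies $Y$ with the double branched cover $\Sigma(S^3,\widehat\beta)$, with $K$ the preimage of the braid axis. This is the Montesinos Trick picture already in play in the paper, so the tools we need are in hand.

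For the forward implication, suppose $Y$ is non-prime and contains such a $K$. Then $\Sigma(S^3,\widehat\beta)$ is reducible, so by the equivariant sphere theorem the covering involution preserves an essential sphere. Such a sphere descends to a $2$--sphere meeting $\widehat\beta$ in at most two points, or to a disc bounded by a component of $\widehat\beta$; since it is essential upstairs, $\widehat\beta$ must be a split link or a non-trivially composite link. By Birman--Menasco's classification of links of braid index at most three \cite{birmanmenasco}, a split or composite closed $3$--braid is, up to conjugacy in $B_3$ and an interchange of the two generators, $\widehat{\sigma_1^r\sigma_2^s}$ for some integers $r,s$, and this link is the connected sum $\b(r,1)\#\b(s,1)$ of the $(2,r)$-- and $(2,s)$--torus links (with $\b(\pm1,1)$ an unknot). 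Since the double branched cover of $S^3$ over $\b(n,1)$ is $L(n,1)$, and double branched covers distribute over connected sums, $Y\cong L(r,1)\#L(s,1)$; non-primeness forces $r,s\neq\pm1$. Tracing the correspondence back, $\psi$ is conjugate to $T_a^r T_b^s$, where $T_a,T_b$ are Dehn twists about the cores $a,b$ of the two annuli whose plumbing is the page (these meet once), so $K$ is the plumbing of the $r$--Hopf band in $L(r,1)$ and the $s$--Hopf band in $L(s,1)$.

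For the reverse implication, fix $r,s\neq\pm1$ and plumb, by a Murasugi sum along a square, the $r$--Hopf band in $L(r,1)$ and the $s$--Hopf band in $L(s,1)$; these are the annular open book pages with monodromies the $r$--th and $s$--th powers of a Dehn twist. By Gabai's theorem on Murasugi sums of fiber surfaces, the result is a fiber surface; the Euler characteristic count $0+0-1=-1$ identifies the page with the once-punctured torus, and the plumbing takes place in $L(r,1)\#L(s,1)$. Downstairs it is the plumbing of the twice-punctured discs underlying the $2$--braids $\sigma_1^r$ and $\sigma_2^s$, so the monodromy corresponds to $\sigma_1^r\sigma_2^s$, whose closure $\b(r,1)\#\b(s,1)$ has double branched cover $L(r,1)\#L(s,1)$, confirming the ambient manifold. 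As $r,s\neq\pm1$, neither summand is $S^3$, so $L(r,1)\#L(s,1)$ is non-prime, and the binding of the plumbed open book is the desired genus one fibered knot.

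The step I expect to be most delicate is the Birman--Hilden bookkeeping: one must confirm that filling the binding slope upstairs corresponds exactly to capping off the braid axis in $S^3$ rather than to some other branched filling, and that the conjugacy class of $\psi$ together with its implicit framing data matches the plumbing description precisely, signs on the exponents $r,s$ included. The remaining ingredients --- the equivariant sphere theorem, the Birman--Menasco list, Gabai's Murasugi sum theorem, and the double branched cover of $S^3$ over a $2$--bridge link being a lens space --- are standard and fit together without friction.
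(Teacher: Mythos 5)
Your proposal is correct and follows essentially the same route as the paper: identify a genus one fibered knot with the lift of the braid axis in the double branched cover of a closed $3$--braid, then invoke Birman--Menasco's classification of split/composite closed $3$--braids (and the uniqueness of their $3$--braid presentations) to pin down both the ambient manifold $L(r,1)\#L(s,1)$ and the monodromy $T_a^rT_b^s$, i.e.\ the Hopf band plumbing. The paper handles your flagged ``delicate point'' about the binding filling by noting that the covering involution acts freely on the boundary of the once-punctured torus bundle and carries a slope meeting each fiber once to a disjoint curve, so it extends freely over the filling solid torus, whose core descends to the braid axis; your added appeals to the equivariant sphere theorem and to Gabai's Murasugi sum theorem are fine but not needed beyond what the paper cites.
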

The {\em $r$--Hopf band} is the fibered link in $L(r,1)$ whose fiber is an annulus and the monodromy is $r$--Dehn twists along the core curve; see \cite{bjk}.

\begin{proof}
The plumbings of the $r$--Hopf band in $L(r,1)$ and the $s$--Hopf band in $L(s,1)$ clearly give genus one fibered knots in  $L(r,1) \# L(s,1)$.
Towards the other direction, note that every genus one fibered knot is the lift of the braid axis in the double branched cover of a link presented as a closed $3$--braid \cite{bakercgofk}.
(A once-punctured torus bundle $X$ is the double branched cover of a $3$--braid in a solid torus.  The induced involution on $X$ acts freely on $\bdry X$ and takes curves in $\bdry X$ that intersect each fiber just once to disjoint curves.  Hence the involution extends freely across a filling of $X$ along such a slope.  In the quotient, the core of the filling solid torus descends to a braid axis for an unknotted embedding into $S^3$ of the solid torus containing the $3$--braid.)
Since Birman-Menacso show that the only non-prime links with braid index $3$ are the links $\b(r,1)\#\b(s,1)$ (where both $r,s \neq \pm1$), and their only presentations as closed $3$--braids are the obvious ones \cite{birmanmenasco}, there can be no other genus one fibered knots in non-prime $3$--manifolds.
\end{proof}

Our attempts to find other hyperbolic doubly primitive knots in connected sums of lens spaces generally failed.  More specifically, our attempts to generalize Berge's sporadic knots \cite{berge} and the duals of Tange's knots \cite{tange} to non-prime manifolds usually resulted in knots that were surgery dual to cabled knots if their exteriors weren't Seifert fibered.  This could be explored further.

\begin{conj}
If $K$ is a hyperbolic doubly primitive knot in a non-prime manifold, then $K=K^{p,q}_{r,s}$ for some slope $(p,q)$ and integers $r,s \neq \pm1$.
\end{conj}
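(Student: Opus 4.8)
The plan is to recast the conjecture as a classification of the exterior $X$ of $K$ as a Dehn-filling problem, and to attack that with a combination of combinatorial surgery theory, Heegaard Floer obstructions, and the classification of non-prime closed $3$--braids already used above. As noted, a doubly primitive knot in a non-prime $Y$ forces $Y$ to have Heegaard genus $2$, so $Y = Y_1 \# Y_2$ with each $Y_i$ of Heegaard genus one and $\ne S^3$, hence each $Y_i$ a lens space or $S^1\times S^2$; the doubly primitive (surface-framed) surgery produces a lens space $W$, and since a surface framing satisfies $\Delta(\gamma,\mu)=1$ with $\mu$ the meridian of $K$, integrality is automatic and Boyer--Zhang (Theorem~\ref{thm:bz}) buys nothing extra here. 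Passing to the surgery dual $K^{*}\subset W$ (whose exterior is again $X$, and which is hyperbolic because $K$ is) one obtains the core problem: classify the hyperbolic knot exteriors $X$ with one torus boundary carrying a lens-space filling $W=X(\gamma)$ and an integral filling $Y=X(\mu)$ that is a connected sum of two manifolds of Heegaard genus one, neither $S^3$.

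For that, I would first extract geometric structure from the reducing sphere. Isotope an essential sphere in $Y=X(\mu)$ to meet the filling solid torus in a least number of meridian disks; its intersection with $X$ is an essential planar surface $P$ whose boundary components all have slope $\mu$ and number at least two (zero or one punctures would contradict $X$ being hyperbolic). On the other side, $X(\gamma)=W$ is a lens space, hence irreducible and atoroidal, and, after a homology count or a distance bound for reducible fillings, not $S^1\times S^2$. Running the Gordon--Luecke/Scharlemann graph-pair analysis on the pair of punctured surfaces constrained by this lens-space filling --- in the spirit of Gordon--Luecke, of Boyer--Zhang's Culler--Shalen seminorm estimates, and of Howie's work on small fillings --- should force $P$ down to two punctures and, more, should show that $X$ fibers over the circle with once-punctured-torus fiber carrying (an isotope of) $K^{*}$; equivalently, that $K$ is isotopic into the fiber of a genus one fibered knot in $Y$. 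In parallel, the $d$--invariant and Turaev-torsion (changemaker-lattice) obstructions of Greene \cite{greene}, applied to the lens-space surgery on $K^{*}$ producing the connected sum $Y_1\# Y_2$, should pin down the homology of the summands and force each $Y_i$ to be $L(r_i,1)$; this step mirrors Greene's treatment of the $S^3$ case, is where the Floer input is essential, and is also how I would excise $S^1\times S^2$ summands and lens-space summands other than the $L(\cdot,1)$.

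With $K$ known to lie on the fiber of a genus one fibered knot in the non-prime $Y$, the preceding Lemma identifies that fibered knot as the plumbing of the $r$--Hopf band and the $s$--Hopf band in $L(r,1)\#L(s,1)$, and Birman--Menasco's classification of non-prime links of braid index $3$, together with the uniqueness of their $3$--braid presentations \cite{birmanmenasco}, puts the ambient picture into the form of a closed $3$--braid $\widehat{\beta}$ banded to a two-bridge link --- exactly the configuration of Theorem~\ref{thm:closedbraidbanding}. Tracking the doubly primitive structure --- the pair of compressing disks hitting $K$ once each --- through the Montesinos trick across this banding should then match $X$ with the exterior of some $K_{r,s}^{p,q}$, reading off the slope $(p,q)$ from the framed arc of Theorem~\ref{thm:closedbraidbanding}.

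The main obstacle is the rigidity step in the middle paragraph: upgrading the presently available bounds --- on $\Delta(\mu,\gamma)$, on the puncture number of $P$, and on the possible $d$--invariants --- to the assertion that $X$ is fibered with $K$ carried by a fiber. Everything currently known of this flavor stops at bounding exceptional fillings or distances, and closing the gap to a complete list is precisely the difficulty that keeps the Cabling Conjecture and Berge's Conjecture open; I would not expect it to go through without a genuinely new ingredient --- for instance a sutured-manifold or taut-foliation argument tailored to reducible fillings, or a sharpening of the changemaker analysis that uses the lens-space base case in place of $S^3$. The remaining steps, while not routine, are the kind of bookkeeping --- the Montesinos trick, $3$--braid normal forms, and the homology of surgeries --- that the rest of this paper already carries out.
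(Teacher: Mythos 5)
There is a genuine gap here, and in fact the statement you are trying to prove is stated in the paper only as a conjecture: the paper offers no proof, explicitly reports that attempts to find counterexamples or further examples ``could be explored further,'' and leaves the classification open (see also Problem~\ref{prob:doublyprim}). Your proposal is a research program rather than a proof, and you correctly identify its fatal step yourself: the claim that the Gordon--Luecke/Scharlemann graph analysis, combined with Floer-theoretic obstructions, forces the exterior $X$ to fiber with $K$ carried by the fiber of a genus one fibered knot. Nothing currently available delivers this. The combinatorial machinery bounds the number of boundary components of the planar surface $P$ and the distance $\Delta(\mu,\gamma)$, but it does not produce a fibration, let alone place $K$ on a once-punctured torus fiber; and Greene's changemaker-lattice argument is anchored to the knot living in $S^3$ (via the sharp four-manifold bounded by the surgery and the embedding of its intersection lattice into a standard one), with no established analogue when the ambient manifold of the dual knot is an arbitrary lens space $W$ and the reducible filling is a connected sum of lens spaces. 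Pinning the summands down to $L(r,1)$ and excising other lens space summands is exactly what is not known.

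Your final paragraph, which reduces the problem (conditional on the fibering step) to the Lemma on genus one fibered knots in non-prime manifolds and to Birman--Menasco, is consistent with how the paper motivates the conjecture: the knots $K^{p,q}_{r,s}$ are precisely the knots lying in the fiber of a genus one fibered knot in $L(r,1)\#L(s,1)$, and the Lemma shows these are the only genus one fibered knots in non-prime manifolds. But that observation is the \emph{source} of the conjecture, not a proof of it: a doubly primitive knot need not lie on the fiber of a genus one fibered knot a priori (the paper's own failed attempts with analogues of Berge's sporadic knots and Tange's duals show the authors probed exactly this). So the proposal should be regarded as an honest account of why the conjecture is plausible and what a proof would require, not as a proof.
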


The dual knot to the doubly primitive surgery on a doubly primitive knot is necessarily a $(1,1)$--knot in the resulting lens space, and integral surgery on any $(1,1)$--knot naturally produces a doubly primitive knot.

\begin{prob}\label{prob:doublyprim}
Classify the doubly primitive knots in non-prime manifolds.
Equivalently, classify the $(1,1)$--knots in lens spaces with non-prime surgeries.
\end{prob}

Among $(1,1)$--knots is a special class of knots, often called {\em simple knots}.
In the lens space $L(p,q)$ with $|p|>2$, choose a compressing disk to each side of the Heegaard torus so that their boundaries intersect $p$ times.
Labeling the intersections around one disk as $0, 1, 2, \dots, p-1$ so that they appear as $0, q, 2q, \dots, (p-1)q$ (reduced mod $p$) around the other, the {\em simple knot} $K(p,q,k)$ in $L(p,q)$ for $k \in \{1, 2, \dots, p-1\}$ is the knot that may be expressed as the union of the arc in each of these disks connecting $0$ and $k$.  Observe that $[K(p,q,1)]$ is a generator of $H_1(L(p,q))$ and $[K(p,q,k)] = k[K(p,q,1)]$.  Extend this by defining $K(p,q,k)$ to be the trivial knot if $k=0$ or $|p|\leq1$.   Simple knots are also known as {\em grid number one knots}  and are fundamental in the study of knots in lens spaces, \cite{rasmussen, greene, saito, tange, bbl, bgh, heddensimple} to name a few.

\begin{lemma}\label{lem:simple}\
\begin{enumerate}
\item A $(1,1)$--knot in a lens space with an integral surgery to $S^1 \times S^2 \# L(t,u)$ is a simple knot.
\item The dual to a doubly primitive knot in $S^1\times S^2 \# L(t,u)$ is a simple knot in the corresponding lens space.
\end{enumerate}
\end{lemma}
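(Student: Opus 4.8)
The plan is to deduce both statements from one assertion: \emph{the surgery dual of a doubly primitive knot is a simple knot}. First I would note that it suffices to prove part (2). Indeed, by the facts recalled just before the lemma, if $K$ is a $(1,1)$--knot in a lens space $L$ with an integral surgery to $Y=S^1\times S^2\#L(t,u)$, then the surgery dual $\kappa\subset Y$ is doubly primitive and $K$ is its surgery dual; so applying (2) to $\kappa$ shows $K$ is simple, which is (1). Thus I only need to show: if $\kappa$ is a doubly primitive knot on a genus--$2$ Heegaard surface $F=\partial H_1=\partial H_2$ of $Y=S^1\times S^2\#L(t,u)$, with compressing disks $D_i\subset H_i$ meeting $\kappa$ transversally once, then the dual $K$ of the doubly primitive (surface) surgery is a simple knot in the resulting lens space $L$.

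The core of the argument is to make the $(1,1)$--structure of $K$ explicit by pushing the doubly primitive data through the surgery. Removing a neighbourhood of $\kappa$ from $F$ and capping the two resulting boundary circles off with meridian disks of the surgery solid torus $N(K)$ yields a torus $\bar F\subset L$ meeting $K$ in two points; this presents $K$ as a $(1,1)$--knot with bridge arcs $\tau_1,\tau_2$ in the solid tori $\bar V_1,\bar V_2$ bounded by $\bar F$. The key point, and the step I expect to be the main obstacle, is to verify that because $D_i$ meets $\kappa$ exactly once, its image under the surgery is an \emph{essential} meridian disk $\bar D_i$ of $\bar V_i$ whose interior contains the bridge arc $\tau_i$; in particular one must rule out that $\bar D_i$ becomes a boundary--parallel (trivial) disk, and one must check that the isotopies used to normalize $\kappa,F,D_1,D_2$ never destroy the condition $|\partial D_i\cap\kappa|=1$. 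This is the delicate surgery bookkeeping. Here the hypothesis that $Y$ has an $S^1\times S^2$ summand is convenient: the standardness of the genus--$2$ Heegaard splitting of $S^1\times S^2\#L(t,u)$ (from the uniqueness theorems of Waldhausen and Bonahon--Otal for $S^1\times S^2$ and lens spaces, together with the standardness of their connected sums) lets one place $F$, and hence the whole surgery region, in a normal form, and the non-separating sphere of the $S^1\times S^2$ summand can be used to certify that the images of $D_1$ and $D_2$ remain essential.

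Granting that, the conclusion is immediate. Since $\partial\tau_1=\partial\tau_2=K\cap\bar F$ lies on $\partial\bar D_1\cap\partial\bar D_2$, the two basepoints of the $(1,1)$--presentation are two of the $|H_1(L)|$ intersection points of the meridians of $\bar V_1$ and $\bar V_2$, and each $\tau_i$ is the unique (up to isotopy) arc joining them inside the meridian disk $\bar D_i$. This is precisely the description of the simple knot $K(p,q,k)\subset L(p,q)$, where $k$ records the ``distance'' between the two basepoints along a meridian; hence $K$ is a simple knot, proving (2) and therefore (1).
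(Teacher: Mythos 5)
Your reduction of (1) to (2) via the doubly primitive/$(1,1)$ duality is fine and consistent with the discussion preceding the lemma, but the argument for (2) has a genuine gap. The step you yourself flag as ``the main obstacle'' --- that after surgery $D_i$ becomes a meridian disk of $\bar V_i$ containing the bridge arc $\tau_i$, positioned so that the two meridian boundaries meet minimally --- is not delicate bookkeeping; it is the entire content of the lemma, and it is asserted rather than proved. Worse, the only role you assign to the $S^1\times S^2$ summand is to ``certify that the images of $D_1$ and $D_2$ remain essential,'' which is not where the difficulty lies: essentiality is easily arranged, whereas the real issue is whether the two bridge arcs can be \emph{simultaneously} isotoped into a pair of minimally intersecting meridian disks. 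An argument of the shape you give, using the $S^1\times S^2$ summand only in that inessential way, would show that the dual of \emph{every} doubly primitive knot in \emph{every} connected sum of lens spaces is simple. That is false: the remark immediately following this lemma points out that the duals of the knots $K^{p,q}_{r,s}$ --- which are doubly primitive in $L(r,1)\#L(s,1)$ --- are in general not simple (Boyer--Zhang's example is a null-homologous knot of genus $1$, while null-homologous simple knots have genus $0$). So the $S^1\times S^2$ summand must enter the proof in a far more essential way than your sketch allows.

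There is also a concrete error in the geometric claim itself. Since $\partial D_i$ meets $\kappa$ once, after the surface-framed surgery the disk $D_i$ has boundary running once over the new solid torus $N(K)$; it is not a meridian disk of $\bar V_i = H_i\cup(\text{2-handle along }\kappa)$ and does not contain $\tau_i$ in its interior. A meridian disk of $\bar V_i$ is instead represented by a compressing disk of $H_i$ \emph{disjoint} from $\kappa$ (a dual companion to $D_i$), and there is no a priori reason that the cocore arc $\tau_i$ is isotopic into it, let alone that this can be done on both sides at once with the meridians in minimal position. The paper itself does none of this from scratch: it proves the lemma by citing Theorem~1.8 of \cite{bbl} and observing that that proof goes through with $S^1\times S^2$ replaced by $S^1\times S^2\#L(t,u)$; the hard work your sketch defers is exactly what that reference supplies.
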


\begin{proof}
The proof exactly follows the proof of Theorem~1.8 of \cite{bbl}, replacing $S^1 \times S^2$ with $S^1 \times S^2 \# L(t,u)$.
\end{proof}

\begin{remark}
The surgery dual to $K^{p,q}_{r,s}$ is a simple knot if either $r=0$ or $s=0$ by Lemma~\ref{lem:simple} or if $|r|=|s|=\pm1$ by Berge \cite{berge}.  However, the duals to our knots $K^{p,q}_{r,s}$ in general are not necessarily simple.  Indeed, Boyer--Zhang's example is a null-homologous knot with genus $1$, but it is not simple since null-homologous simple knots have genus $0$. 
\end{remark}

In light of Lemma~\ref{lem:simple} and that the knot Floer homology of simple knots is simple, the following special case of Problem~\ref{prob:doublyprim} should be more tractable.
\begin{prob}
Classify the simple knots in lens spaces with an integral surgery to $S^1 \times S^2 \# L(t,u)$.
\end{prob}


\section{The basics}

\subsection{Banding and framed surgery}
Let $I$ denote the closed interval $[-1,1]$.
For a link  $L$ in $S^3$.  Let us say an {\em arc} $a$ on $L$ is (the image of) an embedding $a \colon I \to S^3$ such that $a \cap L = \bdry a$.  A {\em framing} of the arc $a$ is (the image of) an embedding $f \colon I \times I \to S^3$ such that $f \vert I \times \{0\} = a$ and $f \cap L = \bdry a \times I$.  A framed arc is also known as a {\em band}.  Framed arcs $a_1$ and $a_2$ on $L$ are {\em isotopic} if there is an isotopy from $a_1$ to $a_2$ through framed arcs on $L$.  They are {\em weakly isotopic} if there is an isotopy between the pairs $(L,a_1)$ and $(L,a_2)$.

A regular neighborhood of a framed arc $a$ on $L$ is homeomorphic to the left of Figure~\ref{fig:banding}.  The right of that figure shows the result of {\em banding} $L$ along the framed arc $a$, producing a link $L'$ with a dual framed arc $a'$.  The figure explicitly shows the framing of each of these arcs.  Such a transformation is also called an integral rational tangle replacement.

\begin{figure}[h!]
\centering
\includegraphics[height=1in]{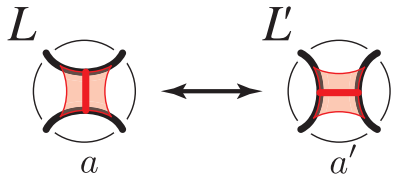}
\caption{ Banding between links $L$ and $L'$ along dual framed arcs $a$ and $a'$.}
\label{fig:banding}
\end{figure}

Let $\Sigma(L)$ denote the double cover of $S^3$ branched along $L$.  In this cover, the framed arc $a$ lifts to a framed knot $K_a$. By the Montesinos Trick \cite{montesinos}, the banding along $a$ lifts to surgery on the framed knot $K_a$ which produces $\Sigma(L')$ and dual framed knot $K_a' = K_{a'}$.


\subsection{Two-bridge links and lens spaces}

Following Kirby-Melvin \cite{kirbymelvin}, given a sequence of integers ${\bf a} = (a_1, \dots, a_n)$ define the continued fraction expansion
\[
 [{\bf a}] = [a_1, \dots, a_n] = -\cfrac{1}{a_1-\cfrac{1}{a_2 - \cfrac{1}{ \ddots -\frac{1}{a_n}}}}
\]
and associated matrix $A_{\bf a} = S T^{a_1} S T^{a_2} S \dots T^{a_n}S \in \PSL_2 \Z$ where $S=\pm\mat{0}{-1}{1}{0}$ and $T =\pm \mat{1}{1}{0}{1}$.   Also set ${\bf a}^{-1} = (-a_n, \dots, -a_1)$.

\begin{lemma}[Lemma 1.9 \cite{kirbymelvin}]
Let ${\bf a} = (a_1, \dots, a_n)$ and $A_{\bf a} = \mat{a}{b}{c}{d}$.
Then
$ a/c =[a_1, \dots, a_n]$, $ b/d = [a_1, \dots, a_{n-1}]$,  $d/c = [a_n, \dots, a_1]$, and $b/a = [a_n, \dots, a_2]$. \qed
\end{lemma}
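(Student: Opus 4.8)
The plan is to induct on the length $n$ of ${\bf a}$, using the two factorizations $A_{(a_1,\dots,a_n)} = S T^{a_1}\, A_{(a_2,\dots,a_n)}$ and $A_{(a_1,\dots,a_n)} = A_{(a_1,\dots,a_{n-1})}\, T^{a_n} S$, both valid for $n\ge2$ because the two adjacent copies of $S$ that would otherwise appear cancel ($S^2=I$ in $\PSL_2\Z$). The first factorization will feed the inductive step for the two ``forward'' statements $a/c=[a_1,\dots,a_n]$, $b/d=[a_1,\dots,a_{n-1}]$, and the second will do the same for the two ``reversed'' statements $d/c=[a_n,\dots,a_1]$, $b/a=[a_n,\dots,a_2]$. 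For the base case $n=1$ a direct multiplication gives $A_{(a_1)}=ST^{a_1}S=\pm\mat{-1}{0}{a_1}{-1}$, so $a/c=d/c=-1/a_1=[a_1]$ while $b/d=b/a=0$, which agrees with the statement once the empty continued fraction is read as $0$. Throughout, every identity of ratios is to be interpreted in $\widehat{\Q}$, so a vanishing denominator just produces $\infty$; since $\det A_{\bf a}=1$, the two matrix entries forming any one of these ratios never vanish simultaneously, so each ratio is a well-defined point of $\widehat{\Q}$.

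For the inductive step via the first factorization, write $A_{(a_2,\dots,a_n)}=\mat{a'}{b'}{c'}{d'}$, so $a'/c'=[a_2,\dots,a_n]$ and $b'/d'=[a_2,\dots,a_{n-1}]$ by hypothesis. Multiplying on the left by $ST^{a_1}=\pm\mat{0}{-1}{1}{a_1}$ gives $A_{(a_1,\dots,a_n)}=\pm\mat{-c'}{-d'}{a'+a_1c'}{b'+a_1d'}$, so $a/c=\frac{-c'}{a'+a_1c'}=\frac{-1}{a_1+a'/c'}=\frac{-1}{a_1+[a_2,\dots,a_n]}=[a_1,\dots,a_n]$ and likewise $b/d=\frac{-d'}{b'+a_1d'}=\frac{-1}{a_1+b'/d'}=[a_1,\dots,a_{n-1}]$. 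The only fact about continued fractions invoked is the ``prepend'' recursion $[a_1,\dots,a_n]=-1/\bigl(a_1+[a_2,\dots,a_n]\bigr)$, immediate from the definition---the leading minus sign in $[a_2,\dots,a_n]$ is precisely what turns the subtraction in the defining nested fraction into the ``$+$'' above.

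The reversed identities come out symmetrically from the second factorization: with $A_{(a_1,\dots,a_{n-1})}=\mat{a''}{b''}{c''}{d''}$ and $T^{a_n}S=\pm\mat{a_n}{-1}{1}{0}$ one gets $A_{(a_1,\dots,a_n)}=\pm\mat{a''a_n+b''}{-a''}{c''a_n+d''}{-c''}$, and the hypotheses $d''/c''=[a_{n-1},\dots,a_1]$, $b''/a''=[a_{n-1},\dots,a_2]$ yield $d/c=\frac{-c''}{c''a_n+d''}=\frac{-1}{a_n+d''/c''}=[a_n,\dots,a_1]$ and $b/a=\frac{-a''}{a''a_n+b''}=\frac{-1}{a_n+b''/a''}=[a_n,\dots,a_2]$ by the same recursion. (Equivalently, the reversed pair follows formally from the forward pair: transposition followed by conjugation by $\mat{0}{1}{1}{0}$ is an anti-automorphism of $\PSL_2\Z$ fixing $S$ and $T$, so it sends $A_{\bf a}=\mat{a}{b}{c}{d}$ to $\mat{d}{b}{c}{a}$, which equals the analogous matrix for the reversed sequence $(a_n,\dots,a_1)$; applying the already-proven forward identities to that sequence reads off $d/c$ and $b/a$.) There is no real obstruction here---the argument is a short matrix computation plus the defining recursion for $[\cdot]$---so the ``hard part'' is merely the bookkeeping: matching each factorization with the correct pair of identities, fixing once and for all the empty-continued-fraction convention in the base case, and keeping the sign straight in the prepend recursion.
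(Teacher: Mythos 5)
Your proof is correct. Note, though, that the paper itself supplies no argument for this statement: it is quoted verbatim as Lemma~1.9 of Kirby--Melvin with the proof deferred to that reference (hence the \verb|\qed| in the statement), so there is nothing in the paper to compare against; your induction on $n$ via the two factorizations $A_{(a_1,\dots,a_n)} = ST^{a_1}A_{(a_2,\dots,a_n)} = A_{(a_1,\dots,a_{n-1})}T^{a_n}S$, together with the prepend recursion $[a_1,\dots,a_n]=-1/(a_1+[a_2,\dots,a_n])$, is the standard and essentially the intended argument, and all the matrix computations check out ($ST^{a_1}=\pm\mat{0}{-1}{1}{a_1}$, $T^{a_n}S=\pm\mat{a_n}{-1}{1}{0}$, base case $A_{(a_1)}=\pm\mat{-1}{0}{a_1}{-1}$). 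Two small remarks: the parenthetical about ``two adjacent copies of $S$'' cancelling is unnecessary and slightly misleading, since both factorizations are immediate from the defining product $ST^{a_1}ST^{a_2}S\cdots T^{a_n}S$ without any cancellation; and you are right that the empty continued fraction must be read as $0$ and that the identities hold in $\widehat{\Q}$, which your determinant observation handles cleanly (and which also covers the division by $c'$ or $a''$ in the recursion step when that entry vanishes).
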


Recall that for coprime integers $p,q$ the two-bridge link  $\b(p,q)$  may be expressed as one of the left or center of Figure~\ref{fig:twobridgeandchain} for some sequence of integers ${\bf a} = (a_1, \dots, a_n)$ such that $q/p = [{\bf a}]$.    The two-bridge links $\b(p,q)$ and $\b(p',q')$ are isotopic if either $p'=p$ and $q'=q^{\pm1} \mod p$ or $-p'=p$ and $-q' = q^{\pm1} \mod p$.   See e.g.\ \cite{burdezieschang}.  In our diagrams, an oblong rectangle labeled with an integer will denote a sequence of twist as illustrated in Figure~\ref{fig:twistingconventions}.

\begin{figure}[h]
\centering
\includegraphics[height=3in]{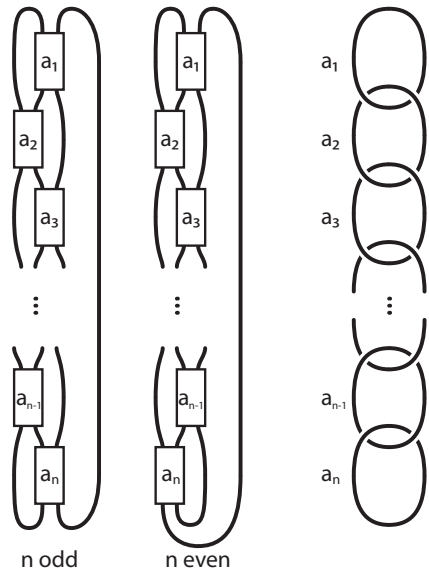}
\caption{For $q/p = [a_1, a_2, \dots, a_n]$, left and center give the two-bridge link $\b(p,q)$ for $n$ odd or even while right gives a surgery description of the lens space $L(p,q)$.}
\label{fig:twobridgeandchain}
\end{figure}

\begin{figure}[h]
\centering
\includegraphics[height=1in]{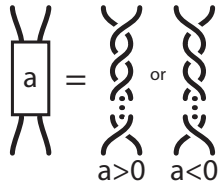}
\caption{Use $|a|$ right-handed or left-handed twists according to the sign of $a$.}
\label{fig:twistingconventions}
\end{figure}

Define the lens space $L(p,q)$ as the result of $-p/q$--Dehn surgery on the unknot.  If $q/p = [a_1, \dots, a_n]$, then $L(p,q)$ also admits a surgery description on the linear chain link shown on the right of Figure~\ref{fig:twobridgeandchain} where the surgery coefficients are the coefficients of the continued fraction, e.g.\ \cite[(2.5)]{kirbymelvin}.
The lens space $L(p,q)$ is the double branched cover of the two-bridge link $\b(p,q)$.


\subsection{Braids}\label{sec:braids}
Let $B_3$ be the three-strand braid group with standard generators $\sigma_1$ and $\sigma_2$ as in Figure~\ref{fig:braidgenerators}.  Let $\rho \colon B_3 \to \PSL_2 \Z$ be the standard homomorphism defined by $\rho(\sigma_1) = \pm T$ and $\rho(\sigma_2) = \pm STS$.  Recall that the kernel of $\rho$ is generated by the full-twist $\Delta^2$ where $\Delta = \sigma_1 \sigma_2 \sigma_1$.  

\begin{figure}[h]
\centering
\includegraphics[width=1in]{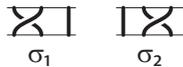}
\caption{The generators $\sigma_1$ and $\sigma_2$ of $B_3$.}
\label{fig:braidgenerators}
\end{figure}

Given $\beta \in B_3$, let $\widehat{\beta}$ denote its {\em braid closure} and let $\wideparen{\beta}$ denote its {\em plait closure} as in the left and right of Figure~\ref{fig:3sbto2br}.
If $\rho(\beta) =\mat{a}{b}{c}{d}$, then $\wideparen{\beta}$ is the two-bridge link $\b(c,a)$. Indeed, there is an integer sequence ${\bf b}$ of odd length such that $\beta = \beta_{\bf b}$, $\rho(\beta) = A_{\bf b}$, and $a/c = [{\bf b}]$.

Let $\phi \colon B_3 \to \hat{\Q}$ be given by $\phi(\beta) = a/c$ if $ \rho(\beta)=\mat{a}{b}{c}{d}$. 
 
\begin{lemma}\label{lem:matequiv}
Given $\beta, \beta' \in B_3$,
if $\phi(\beta) = \phi(\beta')$ then $\beta' = \Delta^{2N} \beta \sigma_1^n$ for some  $n,N \in \Z$.
\end{lemma}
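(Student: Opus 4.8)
The plan is to work directly in $\PSL_2\Z$ and pull the resulting matrix identity back through $\rho$. Suppose $\phi(\beta) = \phi(\beta')$, say $\rho(\beta) = \mat{a}{b}{c}{d}$ and $\rho(\beta') = \mat{a'}{b'}{c'}{d'}$ with $a/c = a'/c'$ in $\hat{\Q}$. Since these are matrices in $\PSL_2\Z$ (so $\det = 1$), the first columns $(a,c)^T$ and $(a',c')^T$ are primitive integer vectors with the same projective class, hence $(a',c') = \pm(a,c)$. The key computation is then to observe that $\rho(\beta)^{-1}\rho(\beta')$ fixes the vector $(1,0)^T$ up to sign: indeed $\rho(\beta)^{-1}$ sends $(a,c)^T$ to $(1,0)^T$ (up to sign, since $ad-bc=1$ forces $\mat{a}{b}{c}{d}^{-1}\mat{a}{c}{ } = \mat{d}{-b}{-c}{a}\col{a}{c} = \col{1}{0}$ using $\det=1$), and it sends $(a',c')^T = \pm(a,c)^T$ to $\pm(1,0)^T$. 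An element of $\PSL_2\Z$ fixing the class of $(1,0)^T$ is upper triangular, and with determinant $1$ in $\PSL_2\Z$ it must be $\pm\mat{1}{n}{0}{1} = T^n$ for some $n \in \Z$. Therefore $\rho(\beta)^{-1}\rho(\beta') = T^n = \rho(\sigma_1^n)$, i.e.\ $\rho(\beta' \sigma_1^{-n}) = \rho(\beta)$, so $\beta'\sigma_1^{-n}$ and $\beta$ have the same image under $\rho$.

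The final step uses the stated fact that $\ker\rho$ is generated by the full twist $\Delta^2$. Since $\Delta^2$ is central in $B_3$, the kernel is exactly $\{\Delta^{2N} : N \in \Z\}$, so $\rho(\beta'\sigma_1^{-n}) = \rho(\beta)$ gives $\beta'\sigma_1^{-n} = \Delta^{2N}\beta$ for some $N \in \Z$, hence $\beta' = \Delta^{2N}\beta\sigma_1^n$ as claimed (again using centrality of $\Delta^2$ to place it on the left).

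I expect the only genuinely delicate point to be the identification of the stabilizer of $[(1,0)^T]$ in $\PSL_2\Z$ with the cyclic group generated by $T$; this is standard but deserves a sentence, since a priori an upper-triangular matrix in $\SL_2\Z$ could have diagonal entries $(-1,-1)$, which is exactly why we are forced to pass to $\PSL_2\Z$ and why the conclusion is only well-defined up to the sign ambiguity already built into $\rho$. Everything else is bookkeeping: the computation $\mat{a}{b}{c}{d}^{-1}\col{a}{c} = \col{1}{0}$ in $\SL_2\Z$, the observation that $\rho(\sigma_1) = \pm T$ so $T^n$ lifts to $\sigma_1^n$, and the use of centrality of $\Delta^2$.
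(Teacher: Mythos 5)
Your argument is correct and follows essentially the same route as the paper's proof: reduce to the observation that an element of $\PSL_2\Z$ fixing the class of $\col{1}{0}$ is a power of $T$, then lift back through $\rho$ using that $\ker\rho$ is generated by the central element $\Delta^2$. You simply spell out the steps (the sign bookkeeping in $\PSL_2\Z$ and the identification of the stabilizer of $[\col{1}{0}]$ with $\langle T\rangle$) that the paper leaves implicit.
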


\begin{proof} 
Let us view a number $a/c \in \hat{\Q}$  as $\pm \col{a}{c}$ for coprime integers $a,c$.
In $\PSL_2\Z$, if $A \col{1}{0} = B \col{1}{0}$, then $B^{-1}A \col{1}{0}= \col{1}{0}$ and hence $B^{-1} A = T^n$ for some $n \in\Z$.
 Since $\rho(\sigma_1) = T$ and the element $\Delta^2 \in B_3$ generates the kernel of $\rho$, we have our conclusion.
\end{proof}

\section{The main arguments}
\subsection{Bandings of $3$--string braids}

Given $\beta \in B_3$, write $\beta = \sigma_2^{b_1}\sigma_1^{b_2}\sigma_2^{b_3} \dots \sigma_1^{b_{n-1}} \sigma_2^{b_n}$ for some sequence of integers ${\bf b} = (b_1, \dots, b_n)$ of odd length $n$.  Then $\rho(\beta) = A_{\bf b}$.


\begin{proof}[Proof of Theorem~\ref{thm:closedbraidbanding}]
Given the slope $(p,q)$, choose  $\gamma \in B_3$ such that $\rho(\gamma) = \mat{q}{q'}{p}{p'}$. This then defines an arc $a_\beta^\gamma$ on the link $\widehat{\beta}=\widehat{\gamma^{-1} \beta \gamma}$ framed by the plane of the page as shown in the left of Figure~\ref{fig:basicarc}.  In fact, this arc only depends upon the slope $(p,q)$ and thus defines the arc $a_\beta^{p,q}$ as we observe: By Lemma~\ref{lem:matequiv}, any $\gamma_0,\gamma_1 \in B_3$ with $\phi(\gamma_0) =\phi(\gamma_1)$ satisfies $\gamma_1 = \Delta^{2N} \gamma_0 \sigma_1^n$ for some $n,N \in \Z$.   Since $\Delta^2$ is a full twist in $B_3$ and the arc is between the strands that $\sigma_1$ acts upon, the isotopy of $\widehat{\gamma_0^{-1} \beta \gamma_0}$ to $\widehat{\gamma_1^{-1} \beta \gamma_1}$ through closed braids (fixing $\beta$) takes the arc $a_\beta^{\gamma_0}$ to the arc $a_\beta^{\gamma_1}$. 

\begin{figure}[h]
\centering
\includegraphics[width=2in]{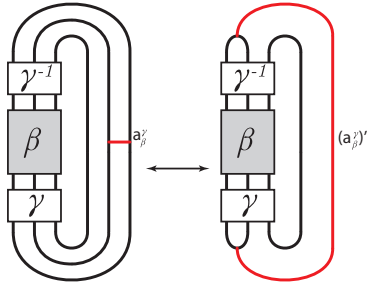}
\caption{The arc $a^\gamma_\beta$ on the braid closure of the $3$--string braid $\beta$.}
\label{fig:basicarc}
\end{figure}

 Figure~\ref{fig:basicarc} shows the banding of $\widehat{\beta}$ along the arc $a_\beta^{p,q}$.  Using $\gamma$ to present $a_\beta^{p,q}$ as  $a_\beta^\gamma$ this produces the two-bridge link $\wideparen{\gamma^{-1} \beta \gamma}$ as shown on the right. 
Using $\rho(\beta) = \mat{a}{b}{c}{d}$ and $\rho(\gamma) = \mat{q}{q'}{p}{p'}$, identifies this link as $\b( -b p^2 - (a - d) p q +c q^2 ,b p p' + (a - d) p q'  -c q q' + a )$.  
\end{proof}

\begin{cor}\label{cor:connectsumbanding}
Fix a pair of integers $(r,s)$.  Then for every slope $(p,q)$  there is a framed arc $a_{r,s}^{p,q}$ on the link $\b(r,1) \# \mathfrak{b}(s,1)$ giving a banding to the link $\b(s p^2 - r s  p q + r q^2, s p p' - r s p q' + r q q'   -1)$  where $p',q' \in \Z$ such that $qp'-pq'=1$.
\end{cor}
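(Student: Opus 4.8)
The plan is to read off Corollary~\ref{cor:connectsumbanding} from Theorem~\ref{thm:closedbraidbanding} applied to the $3$--braid whose closure is $\mathfrak{b}(r,1)\#\mathfrak{b}(s,1)$. Recall that $\mathfrak{b}(r,1)$ is a $(2,r)$--torus link, so $\mathfrak{b}(r,1)\#\mathfrak{b}(s,1)$ is visibly a closed $3$--braid: a $(2,r)$--torus link carried by the first two strands, sharing its middle strand with a $(2,s)$--torus link carried by the last two strands. After fixing the orientation and handedness conventions implicit in the definitions of $\mathfrak{b}(-,-)$ and of $\rho$, this braid is $\beta_0 = \sigma_2^{s}\sigma_1^{r}$, for which $\rho(\beta_0) = \mat{1-rs}{s}{-r}{1}$; so in the notation of Theorem~\ref{thm:closedbraidbanding} we have $a = 1-rs$, $b=s$, $c=-r$, $d=1$, hence $a-d = -rs$. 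Feeding these into the theorem already produces the desired framed arc $a_{r,s}^{p,q}$ on $\widehat{\beta_0}$.

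What the theorem hands back is a banding to
\[
\mathfrak{b}\bigl(-sp^{2} + rspq - rq^{2},\; spp' - rspq' + rqq' + 1 - rs\bigr), \qquad qp'-pq' = 1,
\]
so the next step is some two-bridge normalization. Writing $P = sp^{2}-rspq+rq^{2}$ and $v = spp'-rspq'+rqq'$, the equivalence $\mathfrak{b}(N,M) = \mathfrak{b}(-N,-M)$ rewrites this as $\mathfrak{b}(P,\,rs-1-v)$, while the parameter claimed by the corollary is $Q = v-1$. Thus it remains to see that $\mathfrak{b}(P, rs-1-v) = \mathfrak{b}(P, v-1)$, and for this it is enough to prove the congruence $(v-1)(rs-1-v)\equiv 1 \pmod P$, after which the standard equivalence $\mathfrak{b}(P,M)=\mathfrak{b}(P,M^{\pm 1}\bmod P)$ finishes it.

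The congruence is the one place where real computation enters, and I would handle it as follows. Expanding $vq-Pq'$ and $vp-Pp'$ and using $qp'-pq'=1$ gives the two relations $vq\equiv sp$ and $vp\equiv rsp-rq \pmod P$. Multiplying each by $v$ and reusing these relations yields $p\,(v^{2}-rsv+rs-1)\equiv -p$ and $q\,(v^{2}-rsv+rs-1)\equiv -q \pmod P$; since $\gcd(p,q)=1$, a Bézout combination forces $v^{2}\equiv rsv-rs\pmod P$, and substituting into $(v-1)(rs-1-v) = -v^{2}+rsv-rs+1$ gives exactly $1 \bmod P$. I expect the genuine difficulty here to be bookkeeping rather than mathematics: pinning down $\beta_0$ and the sign conventions so that $\widehat{\beta_0}$ is precisely $\mathfrak{b}(r,1)\#\mathfrak{b}(s,1)$, and carrying Theorem~\ref{thm:closedbraidbanding}'s output cleanly through the two-bridge identifications.
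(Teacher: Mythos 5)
Your proof is correct and is essentially the paper's: the paper likewise applies Theorem~\ref{thm:closedbraidbanding} to the obvious closed $3$--braid presentation of $\b(r,1)\#\b(s,1)$, choosing $\beta=\sigma_2^{r}\sigma_1^{s}$ with $\rho(\beta)=\mat{-1}{-s}{r}{-1+rs}$ so that the theorem's formula lands directly on the stated parameters. Your transposed choice of braid merely forces the additional inverse-mod-$P$ two-bridge normalization, and that congruence computation (via $vq\equiv sp$, $vp\equiv rsp-rq \pmod P$ and coprimality of $p,q$) checks out.
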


\begin{proof}
Let $\beta = \sigma_2^r \sigma_1^s$ so that $\rho(\beta) = \mat{-1}{ -s}{r}{ -1 + r s}$
 and apply Theorem~\ref{thm:main}.
\end{proof}

\subsection{Chain links}

For integers $n\geq2$, let $C_{2n}$ be the minimally twisted (i.e.\ untwisted) chain link of $2n$--components.  Figure~\ref{fig:chainlink} shows two presentations of $C_{10}$. (Thurston calls this link $D_{2n}$  \cite[Chapter 6]{thurston}.)

\begin{figure}[h]
\centering
\includegraphics[height=1in]{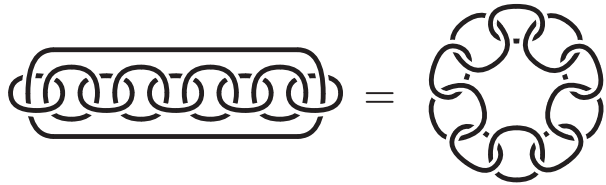}
\caption{The minimally twisted chain link $C_{10}$.}
\label{fig:chainlink}
\end{figure}

\begin{lemma}\label{lem:hypchainlinksurgery}
For integers $n\geq 3$ and $s \neq 0, \pm 2, \pm3$, the result of $s$--surgery on one cusp of the complement of $C_{2n}$ is a hyperbolic manifold.  For $s = \pm2$ or $\pm3$, the result of $s$--surgery on one cusp of the complement of $C_{6}$ is a hyperbolic manifold.
\end{lemma}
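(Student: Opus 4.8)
The plan is to combine hyperbolicity of the link complement $S^3\setminus C_{2n}$ with a control of its exceptional fillings that is uniform in $n$. First I would record that $S^3\setminus C_{2n}$ is hyperbolic and has a natural decomposition into ideal hyperbolic octahedra (in the spirit of the Whitehead link and Borromean rings complements), and that the symmetry group of $C_{2n}$ acts transitively on the components (up to an overall reflection), so that ``$s$--surgery on one cusp'' is unambiguous. The key structural point is local: a component of $C_{2n}$ clasps only its two neighbours, so the Euclidean cusp cross-section of that component -- hence its maximal cusp area and the translation lengths of the meridian and of the surgery longitude -- is bounded independently of $n$.

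With this geometry in hand I would first dispatch all slopes with $|s|$ large, uniformly in $n$: the normalized length of $s\mu+\lambda$ grows linearly in $|s|$ with an $n$--independent constant, so there is a universal $S_0$ beyond which this length exceeds $6$, and then the $s$--filling is hyperbolic for every $n$ by the $6$--theorem of Agol and Lackenby. This leaves only a finite list of slopes to be handled for all $n\ge 3$ simultaneously, together with the three extra slopes $\pm 2,\pm 3$ in the single case $n=3$.

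For those remaining slopes I would produce the hyperbolic structure on the filled manifold $M_{2n}(s)$ directly and uniformly in $n$, by deforming the octahedral decomposition of $S^3\setminus C_{2n}$ through the $s$--filling: subdivide into ideal tetrahedra and exhibit an explicit strict angle structure on the resulting ideal triangulation of $M_{2n}(s)$ whose dihedral angles depend only on $s$ and on the fixed local pattern near the filled component. By Casson's criterion and the theory of angle structures (Lackenby; Futer--Gu\'eritaud), existence of such a structure forces $M_{2n}(s)$ to be irreducible, boundary--incompressible, atoroidal and anannular, hence hyperbolic -- a manifold with $2n-1\ge 5$ torus cusps and those properties is Haken and admits no Seifert fibering. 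One finds the angle inequalities solvable precisely when $s\notin\{0,\pm 2,\pm 3\}$, with $\pm 2,\pm 3$ recovered only when $n=3$; the same bookkeeping exhibits the excluded fillings as genuinely non-hyperbolic ($s=0$ reducible, and for $n\ge 4$ the slopes $\pm 2,\pm 3$ toroidal or Seifert-fibered), consistent with the Martelli--Petronio and Martelli--Petronio--Roukema censuses of exceptional fillings of the magic manifold and the minimally twisted $5$--chain link. Where a direct angle structure is unwieldy one can instead recognize $M_{2n}(s)$ as another, possibly twisted, chain-link complement and invoke the known hyperbolicity of those, and verify the genuinely finite residual cases by computer.

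The main obstacle is exactly the $n$--uniformity of this last step: one cusp's geometry is $n$--independent, but an essential annulus or torus in $M_{2n}(s)$ could wrap around arbitrarily many of the remaining $2n-1$ cusps, so ruling such a surface out requires either the uniform machinery of angle structures (deforming a decomposition whose size grows linearly with $n$) or a Gordon--Wu type intersection estimate against the surgery solid torus that does not degrade with $n$; and pinning down precisely why $n=3$ is the borderline case for the slopes $\pm 2,\pm 3$ is the delicate combinatorial point.
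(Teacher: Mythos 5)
Your overall architecture (the $6$--theorem for large $|s|$, special handling of the small slopes, computer verification of a finite residue) matches the paper's, but the proposal has a genuine gap at its center: the step you yourself flag as ``the main obstacle'' --- producing strict angle structures on the filled manifolds $M_{2n}(s)$ uniformly in $n$ for the leftover small slopes --- is never carried out, and without it the lemma is not proved. The difficulty is that your first step only yields \emph{some} universal threshold $S_0$, leaving a finite but unspecified list of slopes (potentially including $\pm1,\pm4,\pm5,\dots$) that must each be treated for \emph{all} $n\ge 3$ simultaneously; that is an infinite family of manifolds per slope, and neither the angle-structure deformation nor the ``recognize it as another chain link'' alternative is made precise. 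Your heuristic that the cusp shape is $n$--independent because a component clasps only its two neighbours also does not by itself give the lower bound on slope length that the $6$--theorem requires: one needs a quantitative lower bound on the meridian translation length in an embedded horoball neighborhood, not merely boundedness of the cusp cross-section.

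The paper closes both gaps with two specific inputs you are missing. First, it views $C_{2n}$ as a fully augmented link and invokes Futer--Purcell to get a horoball expansion in which the meridian of a horizontal component has length exactly $2$ and is orthogonal to the longitude; hence the slope $s\mu+\lambda$ has length $\sqrt{4s^2+\lambda^2}>6$ already for $|s|>3$, making the $6$--theorem threshold explicit and uniform in $n$ with no residual ``large'' slopes. Second, for $s=\pm1$ it observes that the filling turns $C_{2n}$ into the minimally twisted chain link with $2n-1$ components, whose hyperbolicity for $n\ge3$ is due to Neumann--Reid --- no angle structures needed. After these two steps the only cases left are $s=\pm2,\pm3$ on $C_6$, a single finite SnapPy computation (which is also how the lemma's statement is calibrated: hyperbolicity at $\pm2,\pm3$ is only asserted for $n=3$). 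Incidentally, your side claim that for $n\ge4$ the fillings at $\pm2,\pm3$ are toroidal or Seifert fibered is neither needed for the lemma nor asserted in the paper, and should not be stated without proof.
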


\begin{proof}
In \cite[Chapter 6]{thurston}, Thurston shows how to explicitly determine a hyperbolic structure on the complement of $C_{2n}$ when $n\geq 3$.  From this one may determine the similarity classes of cusp shapes, determine the normalized length of the slope $s$, and apply the work of Hodgson-Kerckhoff \cite{HK} to constrain the set of surgery slopes $s$ that possibly do not yield hyperbolic manifolds.  However, as pointed out by Purcell, we can improve upon this by appealing to the $6$--Theorem of Agol \cite{agol6} and Lackenby \cite{lackenby6} and Perelman's Geometrization Theorem \cite{perelman1,perelman2}.

Viewing the link $C_{2n}$ as a chain of alternately horizontal unknots and vertical ``crossing circles'' presents it as an example of a fully augmented link; see \cite{purcell-augmented} for an introduction.  In the Euclidean cross-sections of these links, the meridian and longitude slopes are orthogonal.  Together Lemma~2.6(3) and Corollary~3.9 of \cite{futerpurcell} show that there is a horoball expansion for the cusps of the complement of $C_{2n}$ so that in the cusp corresponding to a horizontal component the meridian has length $2$ and the longitude has length $\lambda >0$.  
When $|s| >3$ the length of the surgery slope will be $\sqrt{4s^2 + \lambda^2} > \sqrt{36+\lambda^2} > 6$.  Then the $6$--Theorem of Agol and Lackenby implies that the resulting filled manifold is hyperbolike which, by Perelman, is equivalent to being hyperbolic.

For $s=\pm1$, the surgery turns $C_{2n}$ into the minimally twisted chain link of $2n-1$ components which Neumann-Reid shows is hyperbolic for $n \geq 3$ \cite{neumannreid}.

For $s=\pm2$ or $\pm3$, SnapPy \cite{snappy} verifies the hyperbolicity of $s$--surgery on one cusp of the complement of $C_6$.  (Actually, we have checked this for $C_8$, $C_{10}$, and $C_{12}$ as well.)
\end{proof}


\subsection{The knots $K^{p,q}_{r,s}$, hyperbolicity, and volumes}

\begin{proof}[Proof of Theorem~\ref{thm:main}]
We follow the proof of Theorem~\ref{thm:closedbraidbanding} and its application to Corollary~\ref{cor:connectsumbanding}, while controlling the specific braids being used.  Through double branched covers this gives a handle on the surgery descriptions on the links $C_{2n}$ of our knots $K^{p,q}_{r,s}$ in $L(r,1) \# L(s,1)$.  Lemma~\ref{lem:hypchainlinksurgery} with Thurston's Hyperbolic Dehn Surgery Theorem then allows our statements about hyperbolicity and volumes.

Given a slope $(p,q)$, choose a sequence ${\bf b} = (b_k, \dots, b_1)$ with $k$ odd such that $q/p = [{\bf b}]=[b_k, \dots, b_1]$.  
Then define the framed arc $a_{r,s}^{p,q}$ framed by the plane of the page on the link $\b(r,1) \# \b(s,1) = \widehat{\beta}$ as on the left of Figure~\ref{fig:definetangle} where $\beta = \sigma_2^r \sigma_1^s$ and $\gamma=\gamma_{\bf b}= \sigma_2^{b_k} \sigma_1^{b_{k-1}} \dots \sigma_2^{b_1} \in B_3$.   As shown, banding along the arc $a^{r,s}_{p,q}$  produces the two-bridge link $\wideparen{\gamma^{-1} \beta \gamma}=\b(t,u)$ where 
\[u/t = [-b_1, \dots, -b_k, 0, r, s, b_k, \dots, b_1] = [-b_1, \dots, -b_k + r, s, b_k, \dots, b_1]\]   
and dual arc $(a^{p,q}_{r,s})'$ on the right of Figure~\ref{fig:definetangle}. 
From Corollary~\ref{cor:connectsumbanding}, this is the two-bridge link
$\b(s p^2 - r s  p q + r q^2, s p p' - r s p q' + r q q'   -1)$.

\begin{figure}[h]
\centering
\includegraphics[width=3in]{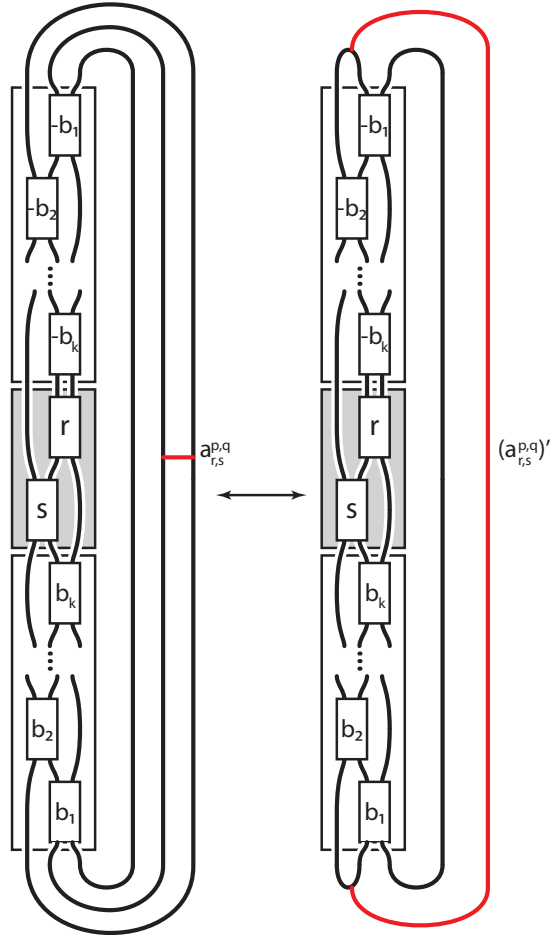}
\caption{The arc $a^{p,q}_{r,s}$ and its banding where $q/p=[b_k, \dots, b_1]$.}
\label{fig:definetangle}
\end{figure}
Taking the double branched cover over the link $\mathfrak{b}(r,1) \# \mathfrak{b}(s,1)$ produces the connected sum $L(r,1) \# L(s,1)$.   We define the knot $K_{r,s}^{p,q}$ to be the lift of the arc $a_{r,s}^{p,q}$ in this cover.  The framing of the arc lifts to a framing of the knot.  Through the Montesinos Trick, the banding along this framed arc lifts to surgery on the knot along this framing.  Hence $K_{r,s}^{p,q}$ admits a surgery to the lens space $L(t,u)$ as claimed.

Observe that any sequence ${\bf b} = (b_k, \dots, b_1)$ with $k$ odd such defines a slope $(p,q)$ where $q/p = [{\bf b}]=[b_k, \dots, b_1]$ and hence a knot $K^{p,q}_{r,s}$.  The exterior of $K_{r,s}^{p,q}$ then admits a surgery description on the link $C_{2(k+1)}$ where the $2k+1$ surgery coefficients are $(b_1, \dots, b_k+r, s, -b_k, \dots, -b_1)$ consecutively. 

Note that by the symmetry of $\b(r,1) \# \b(s,1)$,  for each slope $(p,q)$ there exists a slope $(p',q')$ such that  $a^{p,q}_{r,s}$ and $a^{p',q'}_{s,r}$ are isotopic arcs.  Similarly, for each slope $(p,q)$ there exists a slope $(p',q')$ such that $K^{p,q}_{r,s}$ and $K^{p',q'}_{s,r}$ are isotopic knots in $L(r,1)\#L(s,1)$.  Use this symmetry to assume that $s \neq 0$ if $(r,s) \neq 0$, and further that $|s| \neq 2,3$  when possible.

\smallskip

Assume $s\neq0$.  Then Lemma~\ref{lem:hypchainlinksurgery} shows that $s$--surgery on one cusp of the complement of $C_{2(k+1)}$ gives a hyperbolic manifold as long as either we have $|s|>4$ or $|s|=1$ and $k\geq 2$ or we have $|s| =2$ or $3$ and $k=2$.  By Thurston's Hyperbolic Dehn Surgery Theorem \cite{thurston-geometrization}, there is a constant $\kappa$  such that choosing the sequence ${\bf b}$ with  $|b_i|>\kappa$  for each $i=1, \dots k$ will ensure that $K_{r,s}^{p,q}$ is hyperbolic.  Fixing $k$ odd, if we set $q_{j}/p_{j}$ to have a continued fraction expansion $[j, j, \dots, j]$ of length $k$, then the volumes of $K_{r,s}^{p_{j},q_{j}}$ for $j > \kappa$  will approach the volume of $C_{2(k+1)}$ as $j \to \infty$.  Hence infinitely many of these knots will be hyperbolic and distinguished by their volume.  

Now assume $|s|>4$ or $|s|=1$.   Since the volume of a hyperbolic link is bounded below by the number of its components times the volume $v$ of a regular ideal hyperbolic tetrahedron \cite{adams}, the volume of $C_{2(k+1)}$ is greater than $2(k+1)v$ for $k\geq2$. 
Then for each $k \geq 2$, choose a slope $(p_k, q_k)$ such that $K_{r,s}^{p_k, q_k}$ has volume within $\epsilon$ of the volume of  $C_{2(k+1)}$ for some fixed $\epsilon >0$. Thus the set of volumes of the knots $K_{r,s}^{p_k,q_k}$ for $k \geq 2$ is unbounded.
\end{proof}


\subsection{The knots of Eudave-Mu\~noz--Wu and Kang}\label{sec:emwukang}

\begin{proof}[Proof of Theorem~\ref{thm:subsume}]
Figure~\ref{fig:emwu} shows the isotopy of the tangle $\xi_p$ in Figure 4.1(a) of Eudave-Mu\~noz--Wu \cite{emwu} into the form of the left of Figure~\ref{fig:definetangle} (as well as Figure~\ref{fig:basicarc}) with a neighborhood of the arc removed.  The two small tangles at the bottom left of this figure indicate the fillings of Figure~4.1(a) \cite{emwu} that produce the two-bridge link Figure~4.1(c) and connected sum Figure~4.1(f) respectively.  In the third frame of our figure, an isotopy twists the boundary of the tangle and is compensated in the small filling tangles below it.  At the end of the figure, we recognize our decomposition with $\beta = \sigma_2^{-3} \sigma_1^{-2}$ and $\gamma = \sigma_2^{-1} \sigma_1^{-p}$.  Since $[-1,-p] = \tfrac{p}{p-1}$, the double branched cover of this tangle, Eudave-Mu\~noz--Wu's manifold $M_p$, is the exterior of our knot $K^{p-1,p}_{-3,-2}$.
\begin{figure}[h]
\centering
\includegraphics[width=\textwidth]{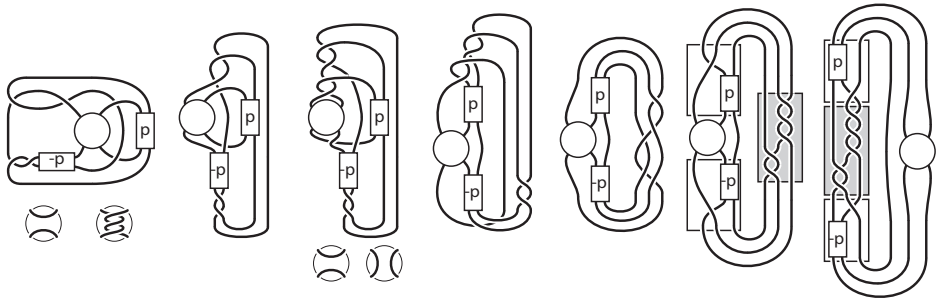}
\caption{An isotopy of the tangle $\xi_p$ of Eudave-Mu\~noz--Wu \cite{emwu}.}
\label{fig:emwu}
\end{figure}

\begin{figure}[h]
\centering
\includegraphics[width=\textwidth]{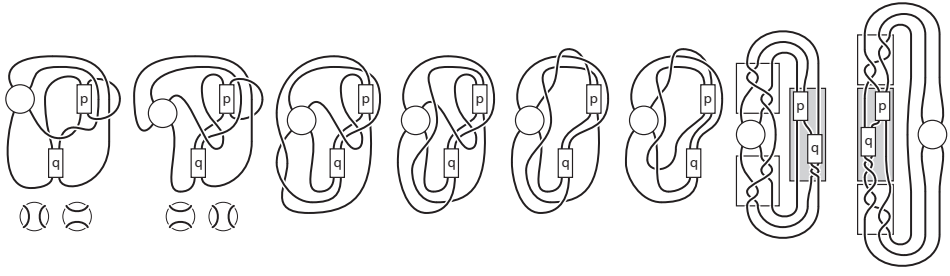}
\caption{An isotopy of the tangle $T_{p,q}$ of Kang \cite{kang}.}
\label{fig:kang}
\end{figure}

Figure~\ref{fig:kang} similarly shows the isotopy of the tangle $T_{p,q}$ in Figure~1 of Kang \cite{kang}. At the end of the figure, we recognize our decomposition with $\beta = \sigma_2^{p} \sigma_1^{q-2}$ and $\gamma = \sigma_2^{0} \sigma_1^{2} \sigma_2^{2}$.  Since $[0,2,2] = \tfrac{3}{2}$, the double branched cover of this tangle, Kang's manifold $M_{p,q}$,  is the exterior of our knot $K^{2,3}_{p,q-2}$.
\end{proof}


\section{Non-hyperbolic knots in lens spaces with non-prime surgeries}\label{sec:nonhyp}

Here we prove the two lemmas cited in the proof Theorem~\ref{thm:nonhyp}, but first let us briefly discuss Klein bottle knots.
As stated in the introduction, a {\em torus knot} is a knot that embeds in the Heegaard torus while a {\em Klein bottle knot} is a knot that embeds in a Klein bottle with annular complement.  We put the extra restriction of annular complement upon Klein bottle knots since the other embedded curves on a Klein bottle in a lens space are all torus knots. Recall that a Klein bottle contains only five distinct isotopy classes of embedded loops: the trivial loop that bounds a disk, the loop that separates the Klein bottle into two \mobius bands, the two cores of these \mobius bands, and the loop whose complement is an annulus.  As Bredon-Wood show, the lens spaces $L(4k,2k-1)$ for $k\in\Z$ are the only ones to contain a Klein bottle and that Klein bottle is unique up to isotopy \cite{bredonwood}.   Such a Klein bottle may be isotoped in the lens space so that the Heegaard torus splits it into two \mobius bands.  The intersection of the Heegaard torus and the Klein bottle is clearly a torus knot.  The two cores of these \mobius bands are cores of the Heegaard solid tori, and hence are torus knots.    The trivial knot that bounds a disk in the Klein bottle may also be regarded as a torus knot.  Our Klein bottle knot may be viewed as the union of a spanning arc in each of the two \mobius bands.  It turns out that this knot is the simple knot $K(4k,2k-1,2k)$, and its exterior Seifert fibers over the \mobius band with one exceptional fiber \cite[Lemma 6.3]{bakerbuck}.  When $|k| \geq 2$ it is not a torus knot \cite[Proposition 5.3]{bakerbuck}.

\begin{lemma}\label{lem:seifertfibered}
Let $K$ be a knot in a lens space $L$ that has Seifert fibered exterior and admits a surgery to a non-prime manifold $Y$. Then either 
\begin{enumerate}
\item $K$ is a torus knot in $L$ and the surgery is along the integral slope induced by the Heegaard torus in which the knot embeds, or
\item $K$ is the Klein bottle knot in the lens space $L(4k,2k-1)$ for some integer $k \neq -1, 0,1$, and the surgery is along the integral slope induced by the Klein bottle in which the knot embeds.
\end{enumerate}
\end{lemma}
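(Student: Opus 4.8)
The plan is to classify knots in lens spaces with Seifert fibered exteriors directly, using the fact that a Seifert fibration of the exterior $M = L \setminus \mathcal{N}(K)$ extends across the filling solid torus in essentially one of two ways. First I would invoke the structure theory of Seifert fibered spaces with torus boundary: if $M$ Seifert fibers, then either the regular fiber on $\partial M$ is the meridian $\mu$ of $K$, or it is some slope $\neq \mu$. In the former case the Seifert fibration of $M$ extends to a Seifert fibration of all of $L$ (with the filling torus a fibered neighborhood of a possibly exceptional fiber), so $L$ is a Seifert fibered lens space and $K$ is a fiber — such a fiber is isotopic into the Heegaard torus, so $K$ is a torus knot, which is case (1). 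In the latter case, $K$ itself is a regular or exceptional fiber in a Seifert fibration of $L$; again $L$ Seifert fibered over $S^2$ forces $K$ onto (or isotopic onto, for exceptional fibers) the Heegaard torus unless the base orbifold is nonorientable. The only nonorientable base orbifold that can occur for a Seifert fibration of a lens space is the Möbius band (giving $L(4k, 2k-1)$ by Bredon--Wood), and then $K$ is the curve on the Klein bottle with annular complement — this is case (2). I would cite the discussion preceding the lemma for the identification of this knot as $K(4k,2k-1,2k)$ and its exterior's Seifert structure, and cite \cite{bakerbuck} for when it fails to be a torus knot (forcing $|k|\geq 2$, i.e.\ $k \neq -1,0,1$).

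Next I would pin down the surgery slope. In case (1), $K$ lies on a Heegaard torus $T$, and $T$ persists as an essential torus (or, after the surgery, as a Heegaard torus or compressible torus) only for the slope that $T$ induces on $\partial M$; more to the point, the only surgery on a torus knot in a lens space that yields a non-prime manifold is the one along the boundary slope of the essential annulus $T \setminus \mathcal{N}(K)$ — this is the slope induced by the Heegaard torus, which is integral. This follows from the standard analysis of surgeries on Seifert fibered knot exteriors: a Dehn filling of a Seifert fibered space over $D^2$ or an annulus is reducible only when it kills the fiber, equivalently when one fills along the fiber slope, which for a knot on the Heegaard torus is the integral framing induced by that torus. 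The same argument applies in case (2): the exterior Seifert fibers over the Möbius band with one exceptional fiber, and the only reducible (indeed the only non-prime, since no Seifert fibered rational homology sphere here) filling is along the fiber slope, which is the slope the Klein bottle induces on $\partial M$; I would check this is integral using that the Klein bottle meets $\mu$ once (it contains a spanning arc realizing the longitude of its annular-complement framing).

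The main obstacle I expect is handling the degenerate and small cases carefully: Seifert fibrations of lens spaces are famously non-unique (e.g.\ $S^3$, $\mathbb{RP}^3$, $L(4,1)$ have multiple distinct fibrations), so I must make sure that "$K$ is a fiber in some Seifert fibration of $L$" genuinely forces $K$ onto a Heegaard torus in case (1), and that I have not missed a fibration over the Möbius band in which $K$ is the exceptional fiber or the exceptional core rather than the annular-complement curve — but those cores are cores of the Heegaard solid tori, hence torus knots, as already noted in the preamble, so they fold into case (1). I would also need to rule out that the surgery produces $S^3 \# (\text{lens space})$ or other small non-prime pieces via a non-fiber slope; here the key input is that a Dehn filling of a Seifert fibered space with base orbifold $D^2(a,b)$ or the Möbius band along a slope other than the fiber is again Seifert fibered over $D^2$ or the Möbius band (closed up), hence irreducible, so non-primeness of $Y$ is equivalent to filling along the fiber slope. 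With that observation the case analysis closes, and excluding $k = \pm 1, 0$ in case (2) is exactly the statement from \cite[Proposition 5.3]{bakerbuck} that the Klein bottle knot is not a torus knot precisely when $|k| \geq 2$ (for $|k| \le 1$ it degenerates to a torus knot and is covered by case (1)).
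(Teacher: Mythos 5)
Your overall architecture is the same as the paper's: (i) a knot in a lens space with Seifert fibered exterior is a torus knot or a Klein bottle knot, (ii) a non-prime filling of such an exterior must be along the fiber slope, which is integral, and (iii) the fiber slope is the framing induced by the Heegaard torus or Klein bottle (the paper sees this by noting that the vertical essential annuli close up along $K$ under the meridional filling to reconstitute the surface). The difference is that the paper simply cites \cite[Theorem~6.1]{bakerbuck} for (i) and \cite[Proposition~5.4]{buckmauricio} for (ii), whereas you attempt to re-derive both, and it is in those re-derivations that problems appear.

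Two concrete issues. First, your opening dichotomy is stated backwards: a Seifert fibration of the exterior $M$ extends over the meridional filling solid torus (making $K$ a possibly exceptional fiber of a fibration of $L$) precisely when the regular fiber on $\partial M$ is \emph{not} the meridian; when the fiber slope \emph{is} the meridian the fibration does not extend and $L$ is the degenerate filling, a connected sum of lens spaces---this case must be handled separately (it forces the exterior to be a solid torus and $K$ to be a core of a Heegaard solid torus), not folded into ``the fibration extends.'' Second, and more seriously for this particular lemma, the step ``a non-fiber filling is again Seifert fibered, hence irreducible'' is false as stated: the reducible Seifert fibered spaces are $S^1\times S^2$ and $\RP^3\#\RP^3$, and the latter is precisely a non-prime connected sum of lens spaces, i.e., exactly the kind of $Y$ you are trying to show cannot arise from a non-fiber filling. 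To close this you must argue that $\RP^3\#\RP^3$, whose Seifert fibration is over $\RP^2$ with no exceptional fibers, cannot carry the extended fibration over $S^2(a,b,n)$ or $\RP^2(c,n)$ with a genuine exceptional fiber---which requires an appeal to the classification of manifolds with non-unique Seifert fibrations---or simply cite \cite[Proposition~5.4]{buckmauricio} as the paper does. The remainder of your argument (integrality of the fiber slope, identification with the surface framing, and the exclusion $k\neq -1,0,1$ via \cite[Proposition~5.3]{bakerbuck}) matches the paper.
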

\begin{proof}
A knot $K$ in a lens space $L$ with Seifert fibered exterior $X$ is either a torus knot or a Klein bottle knot \cite[Theorem~6.1]{bakerbuck}.  If $K$ has a surgery to a non-prime manifold, then its exterior $X$ Seifert fibers either over the disk with exactly two exceptional fibers or over the \mobius band with exactly one exceptional fiber, and in both cases the surgery slope is isotopic to a regular fiber and integral \cite[Proposition~5.4]{buckmauricio}.  (If $X$ Seifert fibers over the \mobius band with no exceptional fibers, then it also fibers over the disk with two exceptional fibers.) Observe that the Seifert fibrations of these manifolds contain essential annuli that are the union of regular fibers.  Since the slope of the meridian intersects the regular fibers in $\bdry X$ just once, in each case the boundary of these annuli join along $K$ in the meridional filling to produce the respective Heegaard torus or the Klein bottle.
\end{proof}

\begin{lemma}\label{lem:satellite}
Let $K$ be a knot in a lens space whose exterior is irreducible, not Seifert fibered, and toroidal.  If $K$ admits a surgery to a non-prime manifold then $K$ is a cabled knot.
\end{lemma}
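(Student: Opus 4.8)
The plan is to exploit the essential torus in the exterior $X$ of $K$, together with the constraints that surgery on $K$ yields a non-prime (in particular reducible, since by Boyer--Zhang's Theorem~\ref{thm:bz} and our reduction we may as well think of the surgered manifold as containing an essential sphere or being $S^1\times S^2\#\cdots$) manifold $Y$. First I would set up the standard ``satellite'' picture: let $T\subset X$ be an essential torus, chosen innermost or outermost as convenient. Since $X$ sits inside a lens space $L$, $T$ either bounds a solid torus on one side in $L$ or is contained in a ball; in the latter case one reduces to the classical $S^3$ situation. So the interesting case is that $T$ cobounds a knot exterior and a solid-torus-or-cable-space piece, exhibiting $K$ as a satellite knot with companion $K'\subset L$ and some pattern $P$ in a solid torus $V$.

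The heart of the argument is then a Gordon-style analysis of the surgery. Let $\widehat K(\gamma)$ denote the result of the $\gamma$--surgery producing $Y$, and restrict attention to the solid torus $V$ containing the pattern. The surgery along $\gamma$ replaces $V$ by $V_\gamma$; the key dichotomy is whether $V_\gamma$ is still a solid torus (equivalently, whether $\gamma$ is a cabling slope of the pattern) or not. If $V_\gamma$ is not a solid torus, then $T$ persists as an essential torus in $Y$, and I would use the fact that a reducible surgery combined with a persisting essential torus forces $Y$ to contain a lens space summand with small first homology, and chase this back to show $\gamma$ must in fact be the cabling slope after all — this is the Scharlemann / Gordon--Luecke style contradiction, adapted to the lens space ambient manifold, using that reducible fillings of a manifold with an essential torus are very constrained (cf.\ the role of the Magic manifold analysis alluded to in the Remark after Theorem~\ref{thm:subsume}). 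If $V_\gamma$ is a solid torus, then the pattern is a torus knot in $V$ and $K$ is a cable of the companion $K'$; but then $X$ Seifert fibers unless the companion exterior is itself non-trivial, in which case the essential torus $T$ is the unique (up to isotopy) cabling annulus's boundary and one reads off directly that $K$ is cabled — this is exactly the desired conclusion.

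The step I expect to be the main obstacle is ruling out the case where $V_\gamma$ fails to be a solid torus yet $Y$ is still non-prime. In the $S^3$ setting this is handled by Gabai's solid-torus theorem combined with the Gordon--Luecke/Scharlemann machinery on reducing and cabling slopes, but in a lens space the homological bookkeeping is murkier: the companion $K'$ need not be null-homologous, the torus $T$ may be non-separating in $L$, and one has to be careful about how a reducing sphere in $Y$ interacts with the decomposition along $T$. I would handle this by an intersection-number argument between a reducing sphere $S$ for $Y$ and the torus $T$, minimizing $|S\cap T|$; standard innermost-disk arguments show $S\cap T=\emptyset$ is impossible (else $S$ lies entirely in a knot-exterior piece or a solid-torus piece, both absurd), and then a Culler--Gordon--Luecke--Shalen-type count of how the surgery can create a compressing disk on the $T$-side of $V$ pins down $\gamma$ as a boundary slope of the cabling annulus. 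Once $\gamma$ is the cabling slope, the conclusion that $K$ is a cabled knot — and indeed that the surgery is along the boundary slope of the cabling annulus, matching the statement of the Lens Space Cabling Conjecture — is immediate.

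I would also remark at the end that this completes the proof of Theorem~\ref{thm:nonhyp}, since Lemma~\ref{lem:seifertfibered} already disposed of the Seifert-fibered exteriors and knots in a ball are trivial.
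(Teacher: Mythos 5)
There is a genuine gap, and it sits exactly where you predicted: the case in which the surgered-out piece fails to be a solid torus yet the result is still non-prime. Your plan there is to run an innermost-disk argument on a reducing sphere $S$ minimizing $|S\cap T|$ followed by a ``CGLS-type count'' that pins the slope to the cabling annulus. That is not a step one can gesture at --- it is essentially the content of Scharlemann's theorem on producing reducible manifolds by surgery on a knot in a bounded $3$--manifold \cite{scharlemannreducible}, which is a substantial piece of combinatorial machinery. The paper's proof does not reprove it; it cuts $L$ along $T$ into $M\supset K$ and its complement and quotes Scharlemann's four-way trichotomy directly: (a) both $M$ and its surgered image $M'$ are solid tori and $K$ is a $0$-- or $1$--bridge braid; (b) $M\cong (S^1\times D^2)\#L$ and $M'$ is a solid torus; (c) $K$ is cabled and the slope is that of the cabling annulus; (d) $M'$ is irreducible and $\bdry$--incompressible. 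Case (d) is killed because a reducing sphere for $Y$ would then have to live in one of two irreducible pieces, and cases (a) and (b) are killed by a chain you do not have: Theorem~\ref{thm:bz} together with \cite[Proposition~5.4]{buckmauricio} (and \cite{gordonlueckereducible} in case (b)) forces the induced surgery slope on the core of $M$ (resp.\ on the companion knot in $S^3$) to be integral, Gordon \cite{gordonsatellite} then forces winding number $1$, and winding number $1$ for a braid makes $T$ boundary-parallel, contradicting essentiality.

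Two further concrete problems. First, your opening dichotomy --- ``$T$ bounds a solid torus on one side or is contained in a ball'' --- omits the possibility $M\cong (S^1\times D^2)\#L$ (the swallow--follow configuration where $K$ is a connected sum of a core of $L$ with a knot in a solid torus); this is precisely Scharlemann's case (b) and must be ruled out separately. Second, your handling of the case where $V_\gamma$ is again a solid torus is wrong in its conclusion: by Gabai \cite{gabai} the pattern is then a $0$-- or $1$--bridge braid, not necessarily a torus knot, so you cannot read off that $K$ is a cable; the correct resolution of that case is the winding-number contradiction above, not the cabling conclusion. (A smaller point: in a lens space with nonzero first homology every closed surface separates, so the worry about $T$ being non-separating does not arise.) In short, the skeleton of your argument is recognizably the same satellite analysis, but the load-bearing steps are all carried by citations to Scharlemann, Gordon, Gordon--Luecke, Boyer--Zhang, and Buck--Mauricio that your sketch replaces with unproven assertions.
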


\begin{proof}
Let $T$ be an essential torus in the irreducible exterior $X$ of a knot $K$ in a lens space $L$.  Then $T$ is separating in $L$ and bounds a $\bdry$--compressible manifold $M \subset L$ that contains $K$. Also, any sphere in $M$ either bounds a ball or a punctured $L$ in $M$.  Assume $K$ admits a surgery to a non-prime manifold $Y$, and let $K'$ be the knot dual to the surgery.  This surgery also transforms $M$ to a manifold $M' \subset Y'$ still bounded by $T$ and containing the knot $K'$.  Then by Scharlemann \cite{scharlemannreducible} one of the following occurs, which we have specialized to our present situation.
\begin{enumerate}[(a)]
\item Both $M$ and $M'$ are solid tori and both $K$ and $K'$ are $0$ or $1$--bridge braids (in particular, Berge-Gabai knots \cite{bergesolidtorus, gabai}).
\item $M = S^1 \times D^2 \# L$ and $K$ is the sum of a core of $L$ and a $0$--bridge braid in $S^1 \times D^2$, and $M'$ is a solid torus where $K'$ is the cable on a $0$--bridge braid.
\item $K$ is cabled and the slope of the surgery is the cabling annulus.
\item $M'$ is irreducible and $\bdry$--incompressible.
\end{enumerate}

Case (c) is the desired result, so we rule out the other possibilities.

In case (a), the result of surgery on $K$ is the result of a surgery on the knot that is the core $J$ of $M$.  Since $L-\nbhd(M)$ is irreducible, Boyer-Zhang \cite[Corollary 1.4]{BoyerZhang-CSseminorms} (here, Theorem~\ref{thm:bz}) and Buck-Mauricio \cite[Proposition~5.4]{buckmauricio} tell us that the induced surgery on $J$ must be integral.  However, by Gordon \cite{gordonsatellite}, the induced surgery on $J$ can only be integral if $K$ has winding number $1$. Thus $K$ must be a braid in $M$ with winding number $1$ which implies that $T=\bdry M$ is isotopic to $\bdry X$.  Hence $T$ is not essential.

In case (b), $L-\nbhd(M)$ is the exterior of a non-trivial knot $J$ in $S^3$ where the meridian of $J$ is the compressing slope in $\bdry M$.  Since $Y$ is obtained by surgery on $J$, namely filling the exterior of $J$ with $M'$, the slope of the meridian of $M'$ must be an integral slope for $J$  by Gordon-Luecke \cite{gordonlueckereducible}.  Again, Gordon \cite{gordonsatellite} implies that $K'$ must have winding number $1$ and it similarly follows that $T$ is not essential.

Finally, in case (d), $\bdry M' = T$ must be incompressible in $Y$.  Since $Y$ is non-prime it contains an essential sphere, and $T$ must be isotopic to a torus that is disjoint from any essential sphere in $Y$.  Thus either $M'$ contains an essential sphere or $Y-\nbhd(M) = L-\nbhd(M)$ contains an essential sphere, a contradiction since both of these manifolds are irreducible.
\end{proof}


\section{Acknowledgements}
We would like to thank Mauro Mauricio for the discussions about reducible surgeries on knots in lens spaces that spurred this article.
We also thank Jessica Purcell for a conversation about minimally twisted chain links and the sharpening of our original argument in Lemma~\ref{lem:hypchainlinksurgery}.  This work was partially supported by Simons Foundation grant \#209184 to Kenneth Baker.


	\bibliographystyle{plain}
	\bibliography{reducible}

\end{document}